\pgfplotsset{compat=newest}
\newcommand{\near}{}
\newcommand{\no}{\nonumber}
\newcommand{\be}{\begin{eqnarray}}
\newcommand{\ben}{\begin{eqnarray*}}
\newcommand{\en}{\end{eqnarray}}
\newcommand{\enn}{\end{eqnarray*}}
\newcommand{\dx}{\,\mathrm{d}x}
\newcommand{\dy}{\,\mathrm{d}y}
\newcommand{\ds}{\,\mathrm{d}s}
\newcommand{\us}{u^{\mathrm{s}}}
\newcommand{\ui}{u^{\mathrm{i}}}
\newcommand{\Green}{G_{\!D}}
\newcommand{\GreenD}[1]{G_{#1}}
\newcommand{\Nop}{\mathcal{G}_{D}}
\newcommand{\NopD}[1]{\mathcal{G}_{#1}}
\DeclareMathOperator{\Diag}{Diag}
\DeclareMathOperator{\HS}{HS}
\newcommand{\Nsample}{N_{\mathrm{sample}}}
\newcommand{\Nmeas}{N_{\mathrm{meas}}}
\newcommand{\Nsrc}{N_{\mathrm{src}}}
\DeclareMathOperator{\supp}{supp}
\newcommand{\shapeman}{\mathcal{M}}
\newcommand{\shape}{\rho}
\newcommand{\paren}[1]{\left(#1\right)}
\DeclareMathOperator{\EV}{\mathbf{E}}
\DeclareMathOperator{\Cov}{\mathbf{Cov}}
\renewcommand{\theequation}{\arabic{section}.\arabic{equation}}
\crefname{hypothesis}{Hypothesis}{Hypotheses}
\title{Passive inverse obstacle scattering problems for the Helmholtz equation
%\thanks{Submitted to the editors DATE.
%\funding{This work was funded by DFG through CRC 1456 (grant 432680300, project C04)}}
}
\author{Thorsten Hohage\thanks{Institute for Numerical and Applied Mathematics, University of Göttingen, Germany, and Max Planck Institute for Solar System Research, Germany
(\email{hohage@math.uni-goettingen.de}).}
\and Meng Liu\thanks{Max Planck Institute for Solar System Research, Germany
(\email{liumeng@mps.mpg.de}).}}
\DeclareMathOperator{\diag}{diag}
\begin{document}
\maketitle

\begin{abstract}
Passive imaging involves recording waves generated by uncontrolled, random sources and utilizing correlations of such waves to image the medium through which they propagate. In this paper, we focus on passive inverse obstacle scattering problems governed by the Helmholtz equation in $\mathbb{R}^d\;(d=2,3)$. The random source is modelled by a Gaussian random process. 
%We formulate three continuous, infinite dimensional mathematical models to describe passive source, obstacle and source-obstacle problems, respectively. 
Uniqueness results are established for the inverse problems to  determine the source strength or shape and location of an obstacle, or both of them simultaneously from near-field correlation measurements. Finally, we present efficient methods for numerical reconstructions.
\end{abstract}

\begin{keywords}
passive inverse obstacle scattering, correlation data, inverse source, inverse obstacle, inverse source-obstacle, uniqueness
\end{keywords}

%\begin{AMS}
%35R30; 65R32; 60H15.
%\end{AMS}

\pagestyle{myheadings}
\thispagestyle{plain}
\markboth{}{Passive inverse obstacle scattering problems for the Helmholtz equation}

%\todo[inline]{amsthm and siamltex are not compatible. siamltex defines its own theorem style, and this is how theorems are supposed to appear in SIAM publications. Please remove amsthm 
%and define theorems etc. according the SIAM guidelines.}

%\todo[inline]{
%We should move the parts on Fr\'echet derivatives and their adjoints to a separate 
%section. [Done, but needs some smoothing.]
%There we have to impose more regularity, can introduce parametrizations
%}

\section{Introduction}
An active new trend in inverse scattering theory is passive imaging, which has applications in numerous fields such as seismology of the Earth and the Sun, oceanography, non-destructive testing, aero-acoustics, or incoherent X-ray diffraction imaging. Roughly speaking, passive imaging consists in extracting information from what would classically be considered as noise, more precisely from correlations of randomly excited waves. 

We study the scattering of randomly excited 
time-harmonic acoustic or transverse electric waves from a bounded obstacle 
$D\subset \mathbb{R}^d\;(d=2,3)$ with smooth 
boundary $\partial D\in\mathcal{C}^2$ and connected complement $\mathbb{R}^d\setminus \overline{D}$. 
The source will be modelled 
by a Gaussian random process $g$ with mean zero and support in a bounded domain  $\Omega\subset\mathbb{R}^d\backslash\overline{D}$. Moreover, let $M$ denote 
some closed Lipschitz measurement curve or surface $M$ containing $\overline{D}$ and $\overline{\Omega}$ in its interior. The geometrical setup is sketched in Fig.~\ref{fig:geometry}.

Without the obstacle $D$, the source $g$ would generate a field $\ui:\mathbb{R}^d\to \mathbb{C}$ 
considered as an incident field, which is uniquely described by 
\be
\label{ui}
\begin{cases}
-\Delta\ui-\kappa^2\ui=g&{\rm in\;}\mathbb{R}^d,\\
\displaystyle{\lim_{r\to\infty}}r^{\frac{d-1}{2}}\left(\frac{\partial \ui}{\partial r}-i\kappa \ui\right)=0&{\rm for\;} r=|x|.
\end{cases}
\en
Here $\kappa>0$ is the wave number. 
The obstacle $D$, which is assumed to be sound-soft, then gives rise to a scattered 
field $\us$, which is superimposed to $\ui$ to 
a total field $u = \ui+\us$ and satisfies
\be
\label{us}
\begin{cases}
\Delta\us+\kappa^2\us=0 & {\rm in\;} \mathbb{R}^d\backslash\overline{D},\\
\us=-\ui & {\rm on\;}\partial D,\\
\displaystyle{\lim_{r\to\infty}}r^{\frac{d-1}{2}}\left(\frac{\partial \us}{\partial r}-i\kappa \us\right)=0&{\rm for\;} r=|x|.
\end{cases}
\en
The last condition in \eqref{us}, known as the Sommerfeld radiation condition.
\begin{figure}[htbp]
\centering
\input{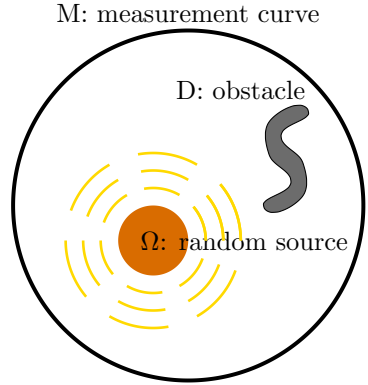}
\caption{\label{fig:geometry}Sketch of the geometrical setup.}
\end{figure}

The inverse problems can be formulated as follows:
\begin{problem}\label{problem}
From near-field correlation measurements of 
\ben
c_{D,{\rm Cov}[g]}\near(x,y):={\rm Cov}(u(x),u(y)),\;x,y\in M
\enn
with $u$ described by \eqref{ui} and \eqref{us}, determine $\Cov[g]$ and/or $D$.
\end{problem}

It will be natural and convenient to work with the covariance operator 
$\mathcal{C}(D,\Cov[g]): L^2(M)\to L^2(M)$ of the random vector $u|_M$, which is the integral operator with kernel $c_{D,{\rm Cov}[g]}$:
\[
[\mathcal{C}(D,\Cov[g])f](x) = \int_M c_{D,\mathrm{Cov}[g]}\near(x,y)f(y)\ds(y),\quad 
x\in M.
\]
Our main theoretical result shows that under certain
assumptions noise-free data $c_{D,\mathrm{Cov}[g]}\near(x,y)$ for a fixed known wave number $\kappa\in\mathbb{R}_{+}$ uniquely determine the obstacle $D$, and for uncorrelated sources $g$ also 
$\Cov[g]$ as well as both of them together. 

\begin{thm}
\label{sourceshape}
Let $D_1, D_2\subset \mathbb{R}^d$ be open, bounded sets such 
that $\mathbb{R}^d\setminus D_i$ are connected for $i\in\{1,2\}$. 
Let $\Omega\subset\mathbb{R}^d$ be another open, bounded set such that 
$\overline{\Omega}\cap(\overline{D_1\cup D_2})=\emptyset$. 
Moreover, let $M$ be the boundary of some bounded domain containing 
$\overline{\Omega\cup D_1\cup D_2}$.  
%For the source strengths $q_1,q_2\in L^{\infty}(\Omega)$ suppose that $\operatorname{supp}q_1$ and $\operatorname{supp}q_2$ both contain open sets. Then $\mathcal{C}\near(D_1,q_1)=\mathcal{C}\near(D_2,q_2)$ implies $D_1=D_2$ and $q_1=q_2$.
Denote by $\Cov[g_i]: L^2(\Omega_i)\rightarrow L^2(\Omega_i)$ the covariance operators corresponding to two different random sources $g_i$ supported on open domains $\Omega_i\subset\Omega$. Then we have the following results:

\begin{itemize}	
\item [(a)] If the operators $\Cov[g_i]$ are positive definite (but not necessarily 
diagonal, i.e., the source processes $g_i$ may be spatially correlated), then $\mathcal{C}\near(D_1,\Cov[g_1])=\mathcal{C}\near(D_2,\Cov[g_2])$ implies $D_1=D_2$;
\item [(b)] If the source processes $g_i$ are spatially uncorrelated with covariance operators given by multiplication operators $\Cov[g_i]=M_{q_i}$ 
and source strength functions $q_i\in L^{\infty}(\Omega)$, $q_i>0$, then $\mathcal{C}\near(D_1,M_{q_1})=\mathcal{C}\near(D_2,M_{q_2})$ implies $D_1=D_2$ and $q_1=q_2$.
\end{itemize}
\end{thm}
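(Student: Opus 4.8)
The plan is to represent the covariance operator through a linear forward operator and to reduce both statements to classical uniqueness principles for the deterministic Helmholtz equation. Write $\mathcal{G}_D\colon L^2(\Omega)\to L^2(M)$ for the source-to-measurement operator $(\mathcal{G}_D\phi)(x)=\int_\Omega G_D(x,z)\phi(z)\,\mathrm{d}z$, where $G_D$ is the Dirichlet Green's function of $-\Delta-\kappa^2$ in $\mathbb{R}^d\setminus\overline{D}$, so that $u|_M=\mathcal{G}_D g$. Since $g$ has mean zero, the definition of the covariance gives, for $x,y\in M$,
\[
c_{D,\Cov[g]}\near(x,y)=\EV[u(x)\overline{u(y)}]=\int_\Omega\int_\Omega G_D(x,z)\,c_g(z,w)\,\overline{G_D(y,w)}\,\mathrm{d}z\,\mathrm{d}w,
\]
with $c_g$ the kernel of $\Cov[g]$, whence $\mathcal{C}\near(D,\Cov[g])=\mathcal{G}_D\,\Cov[g]\,\mathcal{G}_D^{*}$. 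The Green's function vanishes on $\partial D$ in its first argument and is symmetric, $G_D(x,z)=G_D(z,x)$; I would record both facts at the outset, since the first forces every column $c_{D,\Cov[g]}\near(\cdot,y)$ to be a radiating field vanishing on $\partial D$, and the second identifies $\mathcal{G}_D^{*}$ with (the conjugate of) a single-layer field sourced on $M$ and evaluated in $\Omega$.

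For (a), the first step is to convert $\mathcal{C}\near(D_1,\Cov[g_1])=\mathcal{C}\near(D_2,\Cov[g_2])$ into a statement about fields. Positive definiteness of $\Cov[g_i]$ yields $\overline{\mathrm{Range}\,\mathcal{C}\near(D_i,\Cov[g_i])}=\overline{\mathrm{Range}\,\mathcal{G}_{D_i}}$, and, more concretely, the closed span of the columns $v_y^{(i)}:=c_{D_i,\Cov[g_i]}\near(\cdot,y)$, $y\in M$, equals $\overline{\mathrm{Range}\,\mathcal{G}_{D_i}}$, which is infinite-dimensional. Because the two kernels coincide on $M\times M$, for each $y$ the radiating fields $v_y^{(1)}$ and $v_y^{(2)}$ share their Dirichlet data on $M$; Rellich's lemma and unique continuation then force $v_y^{(1)}=v_y^{(2)}$ throughout the unbounded component of $\mathbb{R}^d\setminus\overline{D_1\cup D_2\cup\Omega}$, while $v_y^{(1)}$ vanishes on $\partial D_1$ and $v_y^{(2)}$ on $\partial D_2$. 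Assuming $D_1\neq D_2$, I would run Schiffer's argument (valid in the sound-soft/Dirichlet case) on a nonempty component $\Omega^{*}$ of $D_2\setminus\overline{D_1}$ or of the symmetric set: each $v_y^{(1)}$ restricts there to a solution of $-\Delta v=\kappa^2 v$ vanishing on $\partial\Omega^{*}$, hence to a Dirichlet eigenfunction for the eigenvalue $\kappa^2$. Since that eigenspace is finite-dimensional, whereas $\{v_y^{(1)}\}_{y\in M}$ spans an infinite-dimensional space and its restriction to $\Omega^{*}$ is injective on traces by unique continuation, this is a contradiction, giving $D_1=D_2$.

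The step I expect to be the main obstacle in (a) is exactly this richness claim: proving that the available fields form an infinite-dimensional family and remain so after restriction to $\Omega^{*}$. This is where positive definiteness is indispensable, since a degenerate source would produce only finitely many fields and defeat Schiffer; it requires combining injectivity of $\mathcal{G}_{D_i}$ modulo its kernel with unique continuation across $\Omega^{*}$, and the geometric bookkeeping guaranteeing that $\Omega^{*}$ lies in the connected region where the fields agree belongs here as well.

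For (b), part (a) already gives $D_1=D_2=:D$, because the multiplication operators $M_{q_i}$ with $q_i>0$ are positive definite. Setting $p:=q_1-q_2\in L^\infty(\Omega)$, the hypothesis reduces to $\mathcal{G}_D M_p\mathcal{G}_D^{*}=0$, i.e.\ $\int_\Omega p(z)\,|w_\psi(z)|^2\,\mathrm{d}z=0$ for every $\psi\in L^2(M)$, where $w_\psi(z)=\int_M G_D(z,x)\overline{\psi(x)}\ds(x)$ is a radiating field for $D$ that solves the homogeneous Helmholtz equation in $\Omega$. Polarizing gives $\int_\Omega p\,f_1 f_2\,\mathrm{d}z=0$ for all $f_1,f_2$ in the span $W$ of the $w_\psi$. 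I would then invoke a Runge approximation property to show $W$ is dense in the space of $H^1$ Helmholtz solutions on $\Omega$, so the identity extends to all local solutions; choosing the entire exponential solutions $f_j(z)=e^{i\xi_j\cdot z}$ with $\xi_j\cdot\xi_j=\kappa^2$ and $\xi_1+\xi_2=k$, which is solvable for $k$ in an open subset of $\mathbb{R}^d$ when $d\geq2$, yields $\widehat{p\chi_\Omega}(k)=0$ on that set. Since $p\chi_\Omega$ is compactly supported its Fourier transform is entire, so it vanishes identically and $p=0$, i.e.\ $q_1=q_2$. The main technical inputs here are the Runge density of $W$ and the density of products of Helmholtz solutions realized through complex-geometric-optics exponentials.
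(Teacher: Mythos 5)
Your proposal is correct and follows essentially the same route as the paper: for (a), equality of the kernels forces the associated fields to coincide in the exterior and to vanish on the boundary of the difference set, and Schiffer's argument together with the finite-dimensionality of Dirichlet eigenspaces (Rellich selection) yields $D_1=D_2$; for (b), one reduces to $\int_\Omega (q_1-q_2)\,f_1\overline{f_2}=0$ over a dense family of Helmholtz solutions, takes plane waves, and concludes by analyticity of the Fourier transform. The two inputs you defer — the infinite-dimensional richness of the field family in (a) and the Runge-type density in (b) — are exactly what the paper supplies via its injectivity argument for $\mathcal{C}\near(D_i,\Cov[g_i])$ (through $\ker\NopD{D_i}^*=\{0\}$) and via Lemma \ref{lem1} with the explicit cutoff construction showing plane waves lie in $\mathcal{W}$, so no CGO solutions are needed.
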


In the field of inverse scattering problems, extensive literature addresses the inverse source, inverse obstacle, inverse potential, and inverse medium problems related to wave equations, using either passive or active measurements. It is well-established that classical inverse source problems do not yield unique solutions due to the presence of nonradiating sources, as discussed in works such as \cite{Bleistein1977} and \cite{Devaney1982}. Possibilities to face this non-uniqueness include imposing additional constraints \cite{Devaney2007}, retrieving partial information from sources \cite{GS:17}, and using multi-frequency data \cite{Bao2010}. We do not attempt to review the rich literature on inverse obstacle scattering problems here, but refer to the monograph \cite{Colton2019}, \cite{Kirsch1993} for uniqueness results, and \cite{Kirsch1993(2)} for the Fr\'echet derivative used in iterative inversion methods.

Regarding random inverse scattering problems, there are established results on inverse random sources and random potential problems. The uniqueness of the source strength given far field 
correlation data is discussed in \cite{Devaney1979}.
%and numerical reconstruction methods are presented in \cite{Li2011,Bao2014, Li2018}. 
Numerical reconstructions of both expectation 
pointwise variance of a random source from multifrequency data
are presented in \cite{Li2018}.
For the uniqueness of the principal symbol of the covariance operator in random potential problems with scattering data in a frequency band, see \cite{Lassas2008, Caro2019,Li2022(2)}. Simultaneous recovery of random sources and both deterministic and random potentials from a single realization of the stochastic 
process and many frequencies 
is investigated in \cite{Li2019} and \cite{Li2021}. Furthermore, the linear sampling method 
has been extended 
specifically for passive obstacle scattering problems in 
\cite{Garnier2023}, focusing on sound-soft inverse acoustic scattering with randomly distributed point sources.

Passive imaging has also been applied in various fields such as helioseismology \cite{Duvall1993,Mueller2023} and aeroacoustics \cite{Hohage2020}. Numerous additional references can be found in the monograph \cite{Garnier2016}. Uniqueness results in helioseismology are presented in \cite{Agaltsov2018, Agaltsov2020}.

Compared to these existing results, the main novelty of 
this paper is the first proof of unique identifiability 
of the shape of an obstacle in a passive imaging setting. 

The remainder of this paper is organized as follows: In Section 2, we formalize the definition of the forward problem and show its well-posedness. Section 3 contains the proof of 
Theorem \ref{sourceshape}. We proceed by analyzing the 
Fr\'echet derivatives of the forward operators and their adjoints in Section 4, before presenting 
reconstruction methods and numerical examples in Section 5. 
The paper ends with a section presenting the final conclusions, followed by an appendix that summarizes results on Hilbert-Schmidt and trace-class operators, along with Gaussian processes and their associated covariance operators.

\section{The forward problem}
\subsection{Well-posedness of the obstacle scattering problem for deterministic distribution-valued sources}

In this subsection we study the well-posedness of the obstacle scattering problem \eqref{ui}-\eqref{us} and the regularity properties of solutions $\ui$ and $\us$. Some challenges arise from low path-wise
regularity of many random source processes $g$, in particular if 
we assume $g$ to be spatially uncorrelated. In this subsection we 
treat $g$ as a deterministic distribution $g\in C^{\infty}(\Omega)'$.

%\todo[inline]{Relate this to the works of Peijung Li on 
%well-posedness? If we do, we should rather say something nice as he is very likely going to be our referee.}

For the random source problem described in \eqref{ui} in $\mathbb{R}^d\;(d=2,3)$, the existence and uniqueness of solutions has been established in certain function spaces when the source term is driven by additive colored noise, see \cite{Bao2016}. Additionally, \cite{Li2020} explores the case where the source is modeled as a microlocally isotropic generalized Gaussian field with compact support.

%{\color{red} P. Li, X, Wang. (2021). Regularity of distributional solutions to stochastic acoustic and elastic scattering problems. Journal of Differential Equations, 285, 640-662. [Lemma 3.1 (a)] not determined.}

Recall that the fundamental solution to 
the Helmholtz equation satisfying the Sommerfeld radiation 
condition is given by
\ben
\Phi\left(x\right): =
\begin{cases}
\dfrac{i}{4}H^{(1)}_0\left(\kappa\left| x\right|\right)&\quad x\in\mathbb{R}^2\backslash\{0\},\\
\dfrac{{\exp(i\kappa\left|x\right|})}{4\pi\left|x\right|}&\quad x\in\mathbb{R}^3\backslash\{0\},
\end{cases}
\enn
where $H^{(1)}_0$ denotes the Hankel function of the first kind of order $0$. 

For smooth incident fields $\ui$ well-posedness of the scattering 
problem \eqref{us} in natural function spaces can be established by variational methods (see \cite{Hohage1999}), and 
in the case of Dirichlet boundary conditions no regularity assumptions 
on $D$ are required. 

In particular, we can choose point sources $\ui = \Phi(\cdot,y)$ with $y\in \mathbb{R}^d\setminus \overline{D}$ as incident fields. Denoting the corresponding scattered 
fields by $G^{\mathrm{s}}_{D}(\cdot,y):=\us$ with $\us$ 
given by \eqref{us} for $\ui$ as above, 
the corresponding total fields yield the 
Green function  
$\Green(x,y)=\Phi(x,y)+\Green^{\mathrm{s}}(x,y)$ 
of the scattering problem \eqref{us}. It can be
uniquely characterized by
\be
\label{2.3}
\begin{cases}
\Delta \Green(\cdot,y)+\kappa^2\Green(\cdot,y)=-\delta_{y}& {\rm in\;} \mathbb{R}^d\backslash\overline{D},\\
\Green(\cdot,y)=0& {\rm on\;}\partial D,\\
\displaystyle{\lim_{r\to\infty}}r^{\frac{d-1}{2}}\left(\frac{\partial \Green(\cdot,y)}{\partial r}-i\kappa \Green(\cdot,y)\right)=0&{\rm for\;} r=|x|
\end{cases}
\en
for all $y\in \mathbb{R}^d\backslash\overline{D}$.
\begin{prop}
\label{solution}
Let $\Omega\subset \mathbb{R}^d$ be bounded. 
Then for any deterministic distribution $g\in 
C^{\infty}(\Omega)'$ with $\supp\; g\subset \Omega$, Problem
\eqref{ui} has a unique distributional solution 
$\ui\in C^{\infty}_c(\mathbb{R}^d)'$
given by 
\be
\label{ui_1}
\ui(x)=\int_{\Omega}\Phi(x-y)g(y)\dy, \quad x\in\mathbb{R}^d
\en
in the sense of a convolution with the distribution 
$\Phi\in C^{\infty}_c(\mathbb{R}^d)'$. 
We have $\ui\in C^{\infty}(\mathbb{R}^d\setminus \overline{\Omega})$. Moreover, if $g\in H^{s}(\mathbb{R}^d)$, then 
$\ui\in H^{s+2}_{\mathrm{loc}}(\mathbb{R}^d)$.

The solution of problem \eqref{us} is given by 
\be
\label{us_1}
u(x) = \int_{\Omega}\Green(x,y)g(y)\dy,
\quad x\in\mathbb{R}^d\setminus \overline{D}.
\en
We have $u\in C^{\infty}(\mathbb{R}^d\setminus (\overline{\Omega}\cup\overline{D}))$, and if $g\in H^{s}(\mathbb{R}^d)$, then 
$u \in H^{s+2}_{\mathrm{loc}}(\mathbb{R}^d\setminus \overline{D})$.
\end{prop}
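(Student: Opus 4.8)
The plan is to reduce both distributional assertions to the classical theory: for the incident field through convolution with the fundamental solution $\Phi$, and for the total field through superposition of the obstacle Green function $\Green$. First I would \emph{define} $\ui := \Phi * g$. Since $\supp g \subset \Omega$ is compact, $g$ is a compactly supported distribution and its convolution with $\Phi \in C^{\infty}_c(\mathbb{R}^d)'$ is a well-defined element of $C^{\infty}_c(\mathbb{R}^d)'$ which coincides with the pairing $\ui(x) = \langle g(y), \Phi(x-y)\rangle$ wherever the latter is classically meaningful. That $\ui$ solves the differential equation in \eqref{ui} is then immediate from $(-\Delta - \kappa^2)\Phi = \delta_0$ and $(-\Delta - \kappa^2)(\Phi * g) = ((-\Delta - \kappa^2)\Phi) * g = \delta_0 * g = g$. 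The smoothness claim $\ui \in C^{\infty}(\mathbb{R}^d \setminus \overline{\Omega})$ I would read off from the inclusion $\operatorname{sing\,supp}(\Phi * g) \subset \operatorname{sing\,supp}\Phi + \supp g = \{0\} + \supp g \subset \overline{\Omega}$.

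The radiation condition is the first delicate point. For large $|x|$ the point lies outside $\overline{\Omega}$, where $\ui$ is smooth and equals $\langle g(y), \Phi(x-y)\rangle$; differentiating under the pairing gives $\partial_r \ui(x) - i\kappa\,\ui(x) = \langle g(y),\, \partial_{r_x}\Phi(x-y) - i\kappa\,\Phi(x-y)\rangle$, and I would conclude the required $o(r^{-(d-1)/2})$ decay from the classical asymptotics of $\Phi$ and its derivatives, which hold uniformly as $y$ ranges over the compact set $\supp g$ — the finite order of $g$ ensuring that only finitely many $y$-derivatives enter. Uniqueness follows from a standard argument: the difference $v$ of two solutions is a distributional solution of the homogeneous Helmholtz equation on all of $\mathbb{R}^d$ with the radiation condition, hence smooth by hypoellipticity of $-\Delta - \kappa^2$, and therefore an entire radiating solution, which must vanish by Rellich's lemma and unique continuation. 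The Sobolev statement is interior elliptic regularity: since $-\Delta - \kappa^2$ has the elliptic principal part $-\Delta$, the distributional solution of $(-\Delta - \kappa^2)\ui = g$ with $g \in H^{s}(\mathbb{R}^d)$ lies in $H^{s+2}_{\mathrm{loc}}(\mathbb{R}^d)$.

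For the total field I would set $u(x) := \langle g(y), \Green(x,y)\rangle$ and use the decomposition $\Green = \Phi + \Green^{\mathrm{s}}$ to write $u = \ui + \us$ with $\us(x) = \langle g(y), \Green^{\mathrm{s}}(x,y)\rangle$. Because $\overline{\Omega}\cap\overline{D}=\emptyset$, for every $y \in \Omega$ the field $\Green^{\mathrm{s}}(\cdot,y)$ together with its $y$-derivatives is smooth up to $\partial D$ and satisfies the homogeneous Helmholtz equation and the radiation condition in $\mathbb{R}^d\setminus\overline{D}$; these properties pass to $\us$ under the finite-order pairing, exactly as for $\ui$. The boundary condition is clean: $\Green(\cdot,y)=0$ on $\partial D$ for each $y$ yields $u=0$, i.e. $\us=-\ui$, on $\partial D$ after pairing with $g$. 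Smoothness on $\mathbb{R}^d\setminus(\overline{\Omega}\cup\overline{D})$ again follows from smoothness of $y\mapsto\Green(x,y)$ there, the claim $u\in H^{s+2}_{\mathrm{loc}}(\mathbb{R}^d\setminus\overline{D})$ from interior elliptic regularity in the exterior combined with the well-posedness of the sound-soft problem for smooth data from \cite{Hohage1999}, and uniqueness from the classical uniqueness of the exterior Dirichlet problem.

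I expect the main obstacle to be making the radiation condition and the boundary trace rigorous for a genuinely distributional source $g$, since both are a priori pointwise or trace statements. The remedy in each case is to route them through local smoothness: near infinity and near $\partial D$ the relevant kernels $\Phi(x-\cdot)$ and $\Green^{\mathrm{s}}(x,\cdot)$ are smooth functions of the source variable on a neighborhood of $\supp g$, so the pairing with the finite-order distribution $g$ produces a smooth field whose asymptotics and traces can be controlled by finitely many $y$-derivatives of those kernels, uniformly. An equivalent route, if one prefers to avoid distributional convolution estimates, is to establish all claims first for $g \in C^{\infty}_c(\Omega)$ by classical scattering theory and then pass to the limit using the weak-$*$ sequential continuity of $g \mapsto \Phi * g$ and $g \mapsto \langle g, \Green(x,\cdot)\rangle$.
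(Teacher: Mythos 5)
Your proposal is correct and follows the same overall architecture as the paper's proof: define $\ui$ as the distributional convolution $\Phi*g$, read off the equation from $(-\Delta-\kappa^2)\Phi=\delta_0$, get smoothness away from $\overline{\Omega}$ from the smoothness of $\Phi$ off the origin, prove uniqueness by the energy-flux/Rellich argument after bootstrapping the difference to $C^{\infty}$ by (hypo)ellipticity, and build $u$ from the decomposition $\Green=\Phi+\Green^{\mathrm{s}}$ with $\us$ obtained by pairing $g$ with the kernel $\Green^{\mathrm{s}}(x,\cdot)$, which is smooth near $\supp g$ because $\overline{\Omega}\cap\overline{D}=\emptyset$ (the paper, like you, justifies the $y$-derivatives of $\Green^{\mathrm{s}}$ by solving the scattering problem with derivatives of delta-distributions as sources).

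The one step where you genuinely diverge is the claim that $g\in H^{s}(\mathbb{R}^d)$ implies $\ui\in H^{s+2}_{\mathrm{loc}}(\mathbb{R}^d)$. You invoke interior elliptic regularity for the operator $-\Delta-\kappa^2$, which is valid for all real $s$ since the coefficients are smooth, and is arguably the shorter route. The paper instead proves a mapping property of the convolution operator itself: it localizes to a neighborhood $\tilde{\Omega}$ of $\Omega$, replaces $\Phi$ by a periodized kernel $\Phi_{\mathrm{per}}$ \`a la Vainikko, and uses the decay $|\widehat{\Phi}_{\mathrm{per}}(j)|\leq C|j|^{-2}$ of its Fourier coefficients to conclude $\ui|_{\tilde{\Omega}}\in H^{s+2}(\tilde{\Omega})$. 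The periodization argument is more self-contained and quantitative (it gives boundedness of $g\mapsto\ui$ between the Sobolev spaces, which is what is actually used later when $g$ is a random variable in $H^{-k}$), whereas your elliptic-regularity argument only delivers the regularity of each individual solution; if you wanted the continuity of the solution map you would have to add a closed-graph or a priori estimate argument. Everything else, including your careful handling of the radiation condition and the boundary trace via the finite order of $g$ and uniform asymptotics of the kernels over $\supp g$, matches the paper's reasoning (and is in fact spelled out in more detail than the paper's terse ``it is well known'').
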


\begin{proof}
It is well known that \eqref{ui_1} defines a distributional solution to \eqref{ui}. 
%To prove the stated regularity of $\ui$, we use boundedness of $\Omega$ to avoid difficulties with slow decay of $\Phi$ at infinity. 
As $\Phi$ and all its derivatives are uniformly bounded on compact subsets 
of $\mathbb{R}^d\setminus\{0\}$, it follows that $\ui$ is infinitely smooth on $\mathbb{R}^d\setminus \overline{\Omega}$. As the values $\ui$ on some neighborhood $\tilde{\Omega}$ of $\Omega$ only depend on $\Phi$ restricted to the 
ball $B(R)$ of radius $R>\mathrm{diam}\;\Omega$ around $0$, 
we 
may replace \eqref{ui_1} by a periodic convolution 
on a cube with side length $>2R$
with a function $\Phi_{\mathrm{per}}$ which coincides 
with $\Phi$ on $B(R)$ 
and smoothly decays to $0$ at the boundaries of the 
periodicity cell (see \cite[\S 3.1]{vainikko:01} for details). 
The Fourier coefficients of $\Phi_{\mathrm{per}}$ can 
be shown to decay of order $|\widehat{\Phi}_{\mathrm{per}}(j)|\leq C|j|^{-2}$ for $
j\neq 0$, and this implies that $\ui|_{\tilde{\Omega}}\in H^{s+2}(\tilde{\Omega})$. 

Given sufficient regularity of $\ui$, 
uniqueness follows from a classical energy flux argument: If $g=0$, then by Green's formula $\Im \int_{\mathbb{S}^{d-1}} \ui \frac{\partial\overline{\ui}}{\partial\nu}\,\mathrm{d}s=0$, and hence $\ui\equiv 0$ due 
to \cite[Thm.~2.12]{Colton2019}. 
But from the identity $(-\Delta + I)\ui = (\kappa^2+1)\ui$ one can see that 
$\ui\in H^t_{\mathrm{loc}}(\mathbb{R}^d)$ implies $\ui\in H^{t+2}_{\mathrm{loc}}(\mathbb{R}^d)$ for all 
$t\in\mathbb{R}$, so $\ui\in C^{\infty}(\mathbb{R}^d)$. 

We claim that $\Green^{\mathrm{s}}\in C^{\infty}((\mathbb{R}^d\setminus \overline{D})^2)$. 
By elliptic regularity we have $\Green^{\mathrm{s}}(\cdot,y)\in C^{\infty}(\mathbb{R}^d\setminus \overline{D})$ for any $y\in \mathbb{R}^d\setminus \overline{D}$, and by the reciprocity principle 
$\Green^{\mathrm{s}}(x,y)= \Green^{\mathrm{s}}(y,x)$, 
we also have 
$\Green^{\mathrm{s}}(\cdot,x)\in C^{\infty}(\mathbb{R}^d\setminus \overline{D})$ for any $x\in \mathbb{R}^d\setminus \overline{D}$. 
By considering solution $\us$ in \eqref{ui}, \eqref{us} with 
right hand side $g$ given by derivatives of delta-distributions, it can also be shown that mixed derivatives of $\Green^{\mathrm{s}}$ of arbitrary order exist. This shows that 
\[
\us(x) := \int_{\Omega}\Green^{\mathrm{s}}(x,y) g(y)\dy,
\quad x\in \mathbb{R}^d\setminus \overline{D}
\]
is well defined and infinitely smooth. Moreover, 
by the properties of $\Green^{\mathrm{s}}$ it satisfies 
\eqref{us} for $\ui$ given by \eqref{ui_1}. 
As $u=\ui+\us$, we obtain the stated regularity properties. 
\end{proof}

%For the given incident field $\ui$, the bounded obstacle $D$ and the boundary condition, the forward problem of \eqref{us} is to determine the distribution of $\us$ of a wave field $u$. The existence of a unique solution of Problem \eqref{us} has been shown in associated function spaces by employing the boundary integral method \cite{Colton2019} and a variational method \cite{Hohage1999}.

\subsection{The random source process}\label{section3}

It is well known that for any positive semidefinite operator $C_g\in L(X)$
on a Hilbert space $X$ there exists a centered Gaussian process $g$ on $X$ with 
$\Cov(g)=C_g$, i.e., $\EV[\overline{\langle g,x_1\rangle}\langle g,x_2\rangle]=\langle C_g x_1,x_2\rangle$ for all $x_1,x_2\in X$ (see \cite[Prop. 2.1.10]{GN:15}). In particular, for 
any $q\in L^{\infty}(\Omega)$ with $q\geq 0$ there exists a centered 
Gaussian process $g$ on $L^2(\Omega)$ with $\Cov(g)=M_q$. 
This process can be constructed explicitly as $g=M_{\sqrt{q}}W$
from Gaussian white noise $W$ on $L^2(\Omega)$ since 
$\Cov(g)=M_{\sqrt{q}}\Cov(W)M_{\sqrt{q}}^* = M_q$ 
(see  Eq.~\eqref{eq:linOp_RP}). 

The embedding operator $J:H^k_0(\Omega)\hookrightarrow L^2(\Omega)$, $k\in \mathbb{N}$ is Hilbert-Schmidt if 
$k>\frac{d}{2}$ (see \cite[Thm.~4]{maurin:61}). It follows 
from Proposition \ref{prop:regularity_white_noise} and 
Lemma \ref{lem:finite_square_norm} that there exists a 
strict version of 
Gaussian white noise $W$ on $L^2(\Omega)$ which is  
a random variable with values in $H^{-k}(\Omega)=H^k_0(\Omega)'$, and 
\[
\EV \|W\|_{H^{-k}}^2=\|J\|^2_{\HS} <\infty
\]
%Gaussian white noise $W$ has a 
(see also \cite[Thm. 4.4.3]{GN:15} for sharp pathwise regularity of results for white noise in terms of Besov spaces for $d=1$). 
%$H^{-s}(\Omega)$ for $s>d/2$, i.e.\ $\|W\|_{H^{-s}}<\infty$ almost 
%surely (see for precise Besov regularity of 
%white noise 
%or use the facts that $W$ is characterized by $\Cov(W)= I_{L^2}$ and that
%the embedding $J:H^s(\Omega)\hookrightarrow L^2(\Omega)$ is Hilbert-Schmidt for $s>d/2$ such that 
%$\EV\|W\|_{H^{-s}}^2=\EV\|J^*W\|_{H^{-s}}^2
%= \operatorname{tr} \Cov(J^*W) = \operatorname{tr}(J^*I_{L^2}J)<\infty$). 
Now suppose that 
\begin{align}\label{eq:q_regularity}
\sqrt{q}\in C^k(\overline{\Omega}).
\end{align}
Then $M_{\sqrt{q}}:H^k_0(\Omega)\to H^k_0(\Omega)$ is 
well-defined and bounded, and hence the adjoint 
$M_{\sqrt{q}}:H^{-k}(\Omega)\to H^{-k}(\Omega)$ is well-defined and bounded as well. This implies that there 
is a strict version of $g=\sqrt{q}W$, which is a 
random variable in $H^{-k}(\Omega)$. 
Therefore, by Proposition \ref{solution} $\ui$ and $u$ defined by \eqref{ui_1} 
and \eqref{us_1} are distributional solutions to \eqref{ui} and \eqref{us}, respectively,  with regularity 
$\ui,u\in H^{-k+2}_{\mathrm{loc}}(\mathbb{R}^d)$. 

Define the near-field operator $\Nop : L^{2}(\Omega)\rightarrow L^2(M)$
\ben
(\Nop\phi)(x):=\int_{\Omega}\Green(x,y)\phi(y){\rm d}y,\;x\in M,
\enn
where $\Green(x,y)$ is the Dirichlet Green's function of Problem \eqref{2.3} with $\Green^s(x,y)=-\Phi(x,y)$ on $\partial D$ for $y\in\Omega$. Then, according to 
\eqref{us_1} we can write 
\[
u|_M =  \Nop g.
\]
As $\operatorname{dist}(\Omega,M)>0$, $\Green$ restricted to 
$\overline{\Omega}\times M$ is continuous, and in particular 
square integrable. Therefore, $\Nop\in \HS(L^2(\Omega), L^2(M))$ (see \eqref{eq:HSL2}). Using  
Lemma \ref{lem:cov_fct_op} and Eq.~\eqref{eq:linOp_RP}
we can derive the expression 
\be
\label{3.2}
\mathcal{C}\near(D,{\rm Cov}[g])=\Cov(u|_M)
=\Cov(\Nop g)
= \Nop \Cov[g]\Nop^* 
= \Nop M_q \Nop^*
\en
for our forward operator and conclude in particular that 
$u|_M$ is a random variable in $L^2(M)$ (see Proposition 
\ref{prop:regularity_white_noise} and Lemma \ref{lem:finite_square_norm}). 
This formula also makes sense if we relax the regularity condition \eqref{eq:q_regularity} to 
\be\label{eq:relaxed_q_smoothness}
q\in L^{\infty}(\Omega),
\en
which we will assume in the following. In this case, 
we can still define random processes $\ui$ and $u$ by
\eqref{ui_1} and \eqref{us_1}, respectively, which 
are random variables in $H^l(K)$ for any smooth compact subset 
$K\subset \mathbb{R}^d\setminus \overline{\Omega}$ and 
$l\in\mathbb{N}_0$, and solutions to the Helmholtz equation. 
However, under the relaxed smoothness assumption \eqref{eq:relaxed_q_smoothness} it 
is no longer obvious in which sense $\ui$ is a solution 
to \eqref{ui} in $\Omega$.

\section{Uniqueness results}\label{uniqueness}
This section presents the unique recovery results for determining the source strength, the shape and location of an obstacle, and both simultaneously, using near-field correlation measurements.

\subsection{Identification of the source strength \texorpdfstring{$q$}{\textit{q}}}
In this subsection, we will consider the case that 
the obstacle $D$ is known, and we will show, as 
a special case of Theorem \ref{sourceshape},  
that the strength $q$ of an uncorrelated source $g$ can be uniquely determined by noise-free 
correlation data. 

\begin{lem}
\label{lem1}
The inclusion
\ben
\mathcal{W}\subset\overline{{\rm ran}(\Nop^{\ast})}\textbf{}.
\enn
holds true for the set
\ben
\mathcal{W}:=\{w\in C^{\infty}(\Omega):\Delta w+\kappa^2 w=0,\;\exists\;\phi_{w}\in C^{\infty}_{0}(\mathbb{R}^d\backslash\overline{D})\text{ s.t. }\phi_{w}|_{\partial D}=0,\;\phi_{w}|_{\Omega}=w\}.
\enn
\end{lem}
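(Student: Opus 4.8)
The plan is to use the standard Hilbert-space duality $\overline{\mathrm{ran}(\Nop^{\ast})}=(\ker\Nop)^{\perp}$, which is valid because $\Nop\in\HS(L^2(\Omega),L^2(M))$ is bounded, and thereby reduce the inclusion to showing that every $w\in\mathcal{W}$ is orthogonal in $L^2(\Omega)$ to $\ker\Nop$. Note first that each $w\in\mathcal{W}$ does lie in $L^2(\Omega)$, being the restriction to the bounded set $\Omega$ of the smooth, compactly supported function $\phi_w$. Hence it suffices to fix an arbitrary $\phi\in\ker\Nop$ and prove $\langle w,\phi\rangle_{L^2(\Omega)}=\int_\Omega w\,\overline{\phi}\dy=0$.

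To exploit $\phi\in\ker\Nop$, I would introduce the field $v(x):=\int_\Omega\Green(x,y)\phi(y)\dy$ for $x\in\mathbb{R}^d\setminus\overline{D}$. By \eqref{2.3} and Proposition \ref{solution}, $v$ is the radiating solution of $\Delta v+\kappa^2 v=-\phi$ in $\mathbb{R}^d\setminus\overline{D}$ with $v=0$ on $\partial D$, and the condition $\phi\in\ker\Nop$ is precisely $v|_M=0$ since $(\Nop\phi)|_M=v|_M$. Because I ultimately need to pair against $\overline{\phi}$ rather than $\phi$, I pass to the complex conjugate $\overline{v}$, which (for real $\kappa$) solves $\Delta\overline v+\kappa^2\overline v=-\overline\phi$, still vanishes on $\partial D$, and satisfies $\overline v|_M=0$.

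The geometric core of the argument — and the step I expect to require the most care — is to show that $v$ vanishes on the region where it is eventually tested. Since $\phi$ is supported in $\Omega$, $v$ solves the \emph{homogeneous} Helmholtz equation on $\mathbb{R}^d\setminus(\overline D\cup\overline\Omega)$. Its radiation condition together with $v|_M=0$ forces $v\equiv0$ in the unbounded exterior of $M$ by uniqueness of the exterior Dirichlet problem (Rellich's lemma and unique continuation). As $M$ itself lies in $\mathbb{R}^d\setminus(\overline D\cup\overline\Omega)$, a neighbourhood of $M$ connects this exterior to the region enclosed by $M$, so unique continuation propagates the vanishing of $v$ to the connected component of $\mathbb{R}^d\setminus(\overline D\cup\overline\Omega)$ meeting $M$; under the natural assumption that $\mathbb{R}^d\setminus\overline\Omega$ is connected this is all of $\mathbb{R}^d\setminus(\overline D\cup\overline\Omega)$. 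The delicate point to double-check is exactly this propagation, i.e.\ that $v$ vanishes throughout the region supporting the source of $\phi_w$ (ruling out cavities around which $\Omega$ might wrap).

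Finally I would close with Green's second identity applied to $\phi_w$ and $\overline v$ on $B_R\setminus\overline{D}$, where $B_R$ is a large ball containing $\supp\phi_w$. On $\partial D$ both $\phi_w$ and $\overline v$ vanish, while on $\partial B_R$ both $\phi_w$ and its normal derivative vanish by compact support, so every boundary term drops. Writing $f_w:=\Delta\phi_w+\kappa^2\phi_w$ and using $\Delta\overline v+\kappa^2\overline v=-\overline\phi$, the interior integrand collapses to $-\phi_w\overline\phi-\overline v\,f_w$, giving $\int_{B_R\setminus\overline D}\phi_w\overline\phi\dy=-\int_{B_R\setminus\overline D}\overline v\,f_w\dy$. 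Since $\phi$ is supported in $\Omega$, where $\phi_w=w$, the left-hand side equals $\langle w,\phi\rangle$; and since $w$ solves the Helmholtz equation on $\Omega$, $f_w$ vanishes on $\Omega$ and is therefore supported in $\mathbb{R}^d\setminus(\overline D\cup\Omega)$, exactly where $v$ (hence $\overline v$) was just shown to vanish. Thus the right-hand side is $0$, so $\langle w,\phi\rangle=0$; as $\phi\in\ker\Nop$ was arbitrary, $w\in(\ker\Nop)^{\perp}=\overline{\mathrm{ran}(\Nop^{\ast})}$, which establishes the inclusion $\mathcal{W}\subset\overline{\mathrm{ran}(\Nop^{\ast})}$.
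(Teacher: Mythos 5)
Your argument is correct and follows essentially the same route as the paper's proof: reduce via $\overline{\mathrm{ran}(\Nop^{\ast})}=(\ker\Nop)^{\perp}$ to an orthogonality statement, show that the field generated by a kernel element vanishes on $\mathbb{R}^d\setminus(\overline{D}\cup\overline{\Omega})$ by Rellich's lemma and analytic continuation, and then kill the pairing with $w$ using Green's identity against the cut-off $\phi_w$, whose Helmholtz residual is supported exactly where that field vanishes. The only difference is cosmetic (you apply Green's second identity directly to $\phi_w$ and $\overline{v}$, whereas the paper applies the representation identity $\int\Green(x,y)(\Delta\phi+\kappa^2\phi)\dx=-\phi(y)$ under a Fubini interchange), and the connectedness caveat you flag is implicitly present in the paper's appeal to analytic continuation as well.
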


\begin{proof}
As $\operatorname{ker}(\Nop)^{\perp} = \overline{{\rm ran}(\Nop^{\ast})}$, 
it suffices to prove that $\mathcal{W}\perp \operatorname{ker}(\Nop)$. Let $w\in\mathcal{W}$ and 
$v\in\operatorname{ker}(\Nop)$. Then $u_{v}(x):=\int_{\Omega}\Green(x,y)v(y){\rm d}y$ 
with the Dirichlet Green's function $\Green$ of Problem \eqref{2.3}
vanishes on $M$. From the uniqueness of Problem \eqref{2.3} and analytic continuation, we obtain that $u_{v}=0$ in 
$\mathbb{R}^d\backslash(\overline{\Omega}\cup \overline{D})$. Together with $\Delta \phi_w+\kappa^2\phi_w=0$ in 
$\Omega$ and the compactness of $\supp \phi_w$
this shows that 
\ben
0&=&\int_{\mathbb{R}^d\backslash\overline{D}}u_v(x)\overline{(\Delta \phi_w(x)+\kappa^2\phi_w(x))}\dx\\
&=&\int_{\Omega}v(y)\left\{\int_{\mathbb{R}^d\backslash\overline{D}}\Green(x,y)\overline{(\Delta \phi_w(x)+\kappa^2\phi_w(x))}\dx\right\}\dy\,.
\enn
Combing with the Green's formula in \cite{Colton2019} and the homogeneous Dirichlet boundary condition of $\Green(x,y)$ on $\partial D$, we see that for any test function 
$\phi\in V:=\{v\in C_{0}^{\infty}(\mathbb{R}^d\backslash\overline{D}):v|_{\partial D}=0\}$ and any $y\in\mathbb{R}^d\backslash\overline{D}$, we have
\ben
\int_{\mathbb{R}^d\backslash\overline{D}}\Green(x,y)(\Delta\phi(x)+\kappa^2\phi(x))\dx=-\phi(y),
\enn
Together with the previous equation this implies that 
$0=-\int_{\Omega}v\overline{w}\dy$,
i.e.\ $v\perp w$.
\end{proof}

Motivated by \cite{Raumer2021} and \cite{Hohage2020}, we first show the following special case of Theorem \ref{sourceshape}:

\begin{thm}
\label{source}
Suppose the assumptions of Theorem \ref{sourceshape} hold true 
with $D_1=D_2=D$. 
Then $\mathcal{C}\near(D,M_{q_1})= \mathcal{C}\near(D,M_{q_2})$ implies $q_1 = q_2$.
\end{thm}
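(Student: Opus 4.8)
The plan is to turn the operator equation $\mathcal{C}\near(D,M_{q_1})=\mathcal{C}\near(D,M_{q_2})$ into an orthogonality relation between $m:=q_1-q_2$ and a large family of products of Helmholtz solutions, and then to close the argument by a complex geometric optics (Calderón-type) density construction that forces $m\equiv0$. Since $q_1,q_2\in L^\infty(\Omega)$ are real-valued and positive, $m$ is real.

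First I would use the factorization \eqref{3.2}, which rewrites the hypothesis as $\Nop M_{q_1}\Nop^\ast=\Nop M_{q_2}\Nop^\ast$, i.e.\ $\Nop M_m\Nop^\ast=0$. Pairing with arbitrary $f,g\in L^2(M)$ and moving $\Nop^\ast$ to the other factor gives
\[
0=\langle \Nop M_m\Nop^\ast f,g\rangle_{L^2(M)}=\int_\Omega m(y)\,(\Nop^\ast f)(y)\,\overline{(\Nop^\ast g)(y)}\dy .
\]
Thus the bounded sesquilinear form $B(h_1,h_2):=\int_\Omega m\,h_1\overline{h_2}\dy$ vanishes on $\operatorname{ran}(\Nop^\ast)\times\operatorname{ran}(\Nop^\ast)$, and by the estimate $|B(h_1,h_2)|\le\|m\|_{L^\infty}\|h_1\|\|h_2\|$ it vanishes on $\overline{\operatorname{ran}(\Nop^\ast)}\times\overline{\operatorname{ran}(\Nop^\ast)}$ by continuity. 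Lemma \ref{lem1} then yields $B(w_1,w_2)=0$ for all $w_1,w_2\in\mathcal{W}$.

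Next I would exhibit enough members of $\mathcal{W}$. For $\zeta\in\mathbb{C}^d$ with $\zeta\cdot\zeta=\kappa^2$ the complex exponential $w_\zeta(y)=e^{i\zeta\cdot y}$ is an entire solution of $\Delta w+\kappa^2 w=0$; since $\overline{\Omega}\cap\overline{D}=\emptyset$, multiplying $w_\zeta$ by a cutoff $\chi\in C_0^\infty$ that equals $1$ on $\overline{\Omega}$ and is supported away from $\overline{D}$ provides an admissible extension $\phi_{w_\zeta}$, so $w_\zeta|_\Omega\in\mathcal{W}$. For these exponentials $w_{\zeta_1}\overline{w_{\zeta_2}}=e^{i(\zeta_1-\overline{\zeta_2})\cdot y}$, and the decisive algebraic point is that for every real $\xi\in\mathbb{R}^d$ one can pick $\zeta_1,\zeta_2$ on the quadric $\{\zeta\cdot\zeta=\kappa^2\}$ with $\zeta_1-\overline{\zeta_2}=\xi$. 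Writing $\zeta_1=\frac{\xi}{2}+\beta$ and $\overline{\zeta_2}=-\frac{\xi}{2}+\beta$ with $\beta=b+ic$, $b,c\in\mathbb{R}^d$, the quadric conditions collapse to $\xi\cdot b=\xi\cdot c=b\cdot c=0$ together with $|b|^2-|c|^2=\kappa^2-\frac14|\xi|^2$, which is solvable in every dimension $d\ge2$ (choose $c=0$ and $b\in\xi^\perp$ for $|\xi|\le2\kappa$, and $b=0$, $c\in\xi^\perp$ for $|\xi|\ge2\kappa$). Substituting into $B(w_1,w_2)=0$ gives $\int_\Omega m(y)e^{i\xi\cdot y}\dy=0$ for every $\xi\in\mathbb{R}^d$, so the Fourier transform of the compactly supported function $m\mathbf{1}_\Omega\in L^\infty$ vanishes identically; hence $m=0$ a.e.\ and $q_1=q_2$.

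The reduction to the form $B$ is routine bookkeeping with the factorization and Lemma \ref{lem1}. The genuine content is the density of products of Helmholtz solutions, and the one step that needs care is verifying that the quadric system stays solvable at all spatial frequencies $|\xi|$. This is automatic for $d\ge3$, where $\xi^\perp$ is at least two-dimensional, but in the physically relevant case $d=2$ the orthogonal complement of $\xi$ is only one-dimensional, so $b$ and $c$ cannot both be nonzero perpendicular vectors; the resolution is to use a purely real exponential ($c=0$) for low frequencies and a purely evanescent one ($b=0$) for high frequencies, switching as $|\xi|$ crosses $2\kappa$. I expect this dimensional case distinction, rather than the functional-analytic reduction, to be the main obstacle to a clean write-up.
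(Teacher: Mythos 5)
Your proposal is correct, and its functional-analytic skeleton coincides with the paper's: reduce by linearity to $\Nop M_m\Nop^*=0$, pass from $\operatorname{ran}(\Nop^*)$ to its closure using $m\in L^\infty(\Omega)$, invoke Lemma \ref{lem1} to test against $\mathcal{W}$, and verify membership in $\mathcal{W}$ via a cutoff supported away from $\overline{D}$ (possible since $\operatorname{dist}(\overline{\Omega},\overline{D})>0$). Where you genuinely diverge is the endgame. The paper tests only with real plane waves $e^{i\kappa\hat{x}\cdot y}$, $\hat{x}\in\mathbb{S}^{d-1}$, so the difference set $\{\hat{x}_2-\hat{x}_1\}$ only yields $\widehat{m}(\xi)=0$ on the ball $|\xi|\leq 2\kappa$, and the proof is closed by the Paley--Wiener observation that $\widehat{m}$ is real analytic (as $m$ is compactly supported) together with analytic continuation. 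You instead use Calder\'on-type complex exponentials $e^{i\zeta\cdot y}$ with $\zeta\cdot\zeta=\kappa^2$; your algebra on the quadric (conditions $\xi\cdot b=\xi\cdot c=b\cdot c=0$, $|b|^2-|c|^2=\kappa^2-\tfrac14|\xi|^2$, with the low/high-frequency case split $c=0$ versus $b=0$) is right and reaches every $\xi\in\mathbb{R}^d$ directly, so no analytic continuation is needed; the exponential growth of the evanescent modes is harmless because $\Omega$ is bounded and the cutoff construction for $\mathcal{W}$ is unchanged. The trade-off: the paper's route is algebraically lighter and leans on compact support of $m$ (which is automatic here), while yours is self-contained at the Fourier level and is the argument that would survive if one only knew $\widehat{m}$ on no open set a priori; your worry about $d=2$ is already resolved by your own case distinction, since in each regime only one vector in $\xi^{\perp}$ is needed.
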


\begin{proof}
Due to the linearity of $\mathcal{C}\near(D,M_q)$ in $q$ it suffices to show that $\mathcal{C}\near(D,M_q)\equiv 0$ for some 
$q\in L^{\infty}(\Omega)$ implies $q=0$. (We will not use non-negativity of the $q_i$.) In this case 
we obtain that
\ben
0=\langle \Nop M_q\Nop^{*}v_1,v_2\rangle_{L^2(M)}=\langle M_q\Nop^{*}v_1,\Nop^{*}v_2\rangle_{L^2(\Omega)}\;\text{ for all }v_1, v_2\in L^2(M).
\enn
Therefore,
\be
\label{4.4(1)}
\langle qu_1,u_2\rangle_{L^2(\Omega)}=0\;\text{ for all }u_1, u_2\in{\rm ran}(\Nop^{*}).
\en
Since $q\in L^{\infty}(\Omega)$, it follows that \eqref{4.4(1)} also holds true for elements $u_1$, $u_2$ of the $L^2$-closure $\overline{{\rm ran}(\Nop^{*})}$, and by Lemma \ref{lem1} in particular for 
elements of $\mathcal{W}$. 
We show that 
\be\label{eq:ux_in_W}
u_{\hat{x}}\in\mathcal{W}\quad \text{for }\quad 
u_{\hat{x}}(y):=\exp(i\kappa\hat{x}\cdot y) \text { with }\hat{x}\in\mathbb{S}^{d-1}.
\en
As $\overline{\Omega}$ and $\overline{D}$ are compact and disjoint 
by assumption, it follows that $\epsilon:=\operatorname{dist}(\overline{\Omega},\overline{D})>0$. Therefore,
\[
B_{\epsilon}(\Omega):=\{x\in\mathbb{R}^{d}\;|\;\operatorname{dist}(\Omega,x)<\epsilon\}\subset\mathbb{R}^{d}\backslash\overline{D}.
\]

% \todo[inline]{Why do we need the open cover? I now inferred  existence of $\eta$ directly without much ado from the assumption. Do you have a partition of unity argument in mind to prove the existence of $\eta$? I think you would need a more elaborate open cover, don't you?}

It follows that 
there exists a cut-off function $\eta\in C^{\infty}(\mathbb{R}^{d})$ with $\eta\equiv 1$ in $\overline{\Omega}$ and $\eta\equiv 0$ in $\mathbb{R}^{d}\backslash\overline{B_{\epsilon}(\Omega)}$. Define $\phi_{u_{\hat{x}}}:=\eta u_{\hat{x}}$ for any function $u_{\hat{x}}\in C^{\infty}(\mathbb{R}^{d}\backslash\overline{D})$, then $\phi_{u_{\hat{x}}}$ coincides with $u_{\hat{x}}$ in $\Omega$ and $\phi_{u_{\hat{x}}}=0$ on $\partial D$. 
As $u_{\hat{x}}$ satisfies the Helmholtz equation in $\Omega$, we have proved \eqref{eq:ux_in_W}. 

We have shown that we can choose $u_1$ and $u_2$ 
in \eqref{4.4(1)} as plane wave functions  $u_{\widehat{x}_1}$ and $u_{\widehat{x}_2}$ with $\widehat{x}_1, \widehat{x}_2\in \mathbb{S}^{d-1}$ to obtain
\be
\label{4.5(1)}
0=\langle qu_{\widehat{x}_1},u_{\widehat{x}_2}\rangle_{L^2(\Omega)}=\int_{\Omega}q(y)\exp(-i\kappa(\widehat{x}_2-\widehat{x}_1)\cdot y){\rm d}y.
\en
As 
\ben
\left\{\widehat{x}_2-\widehat{x}_1:\widehat{x}_1, \widehat{x}_2\in \mathbb{S}^{d-1}\right\}=\left\{\xi\in\mathbb{R}^d:|\xi|_2\leq 2\right\}
\enn
the Fourier transform of $q$, extended by zero to the whole space, \eqref{4.5(1)} satisfies 
\ben
\widehat{q}(\xi)=\int_{\mathbb{R}^d}q(y)\exp(-i\xi\cdot y){\rm d}y=0\quad \text{ for }|\xi|_2\leq 2\kappa\,. 
\enn
Since $q$ has compact support, $\widehat{q}$ is real analytic, and hence it must vanish everywhere by analytic continuation. In view of the injectivity of the Fourier transform, we obtain that $q=0$, which completes the proof.
\end{proof}

\subsection{Simultaneous identification of a source strength 
\texorpdfstring{$q$}{q} and an obstacle \texorpdfstring{$D$}{D}}\label{subsource-shape}

%In this part, we aim to recover a source strength function and the shape and location of an obstacle simultaneously from near-field correlation measurements.

%We consider the forward operator $\mathcal{C}\near$ as an operator-valued linear mapping from $C^2(\partial D_{\widetilde{q}})\times L^{\infty}(\Omega)$ to Hilbert-Schmidt operator in the space ${\rm HS}(L^2(M))$.
%\ben
%\mathcal{C}\near:C^2(\partial D_{\widetilde{q}})\times L^{\infty}(\Omega)\rightarrow {\rm HS}(L^2(M)).
%\enn
%Then, we focus on the uniqueness results for the passive inverse source-obstacle problems. We will prove uniqueness result for the operator equation
%\ben
%\mathcal{C}\near(\widetilde{q},q)=C
%\enn
%for a function $\widetilde{q}$ related to the obstacle $D_{\widetilde{q}}$, a source strength function $q$ and $C\in{\rm ran}(\mathcal{C}\near)$.

%Meanwhile, some more properties on the source strength $q$ are required as below.

%\begin{assumption}{{\rm(Source property)}}
%\label{assumption4.7}
%There exists a constant $q_0>0$ such that $q(y)\geq q_0$ for a.e. $y\in\Omega$.
%\end{assumption}
We are now in a position to prove our main theorem. 

\begin{proof}[Proof of Theorem \ref{sourceshape}.]
Firstly, we suppose that $D_1\neq D_2$ and $\Cov[g_1]\neq \Cov[g_2]$. 
Since the operators $\mathcal{C}\near(D_1,\Cov[g_1]) = \mathcal{C}\near(D_2,\Cov[g_2])$ 
are compact, self-adjoint and positive semidefinite and since $L^2(M)$ is separable, 
there exists a complete orthonormal system $E=\{f_j:j\in\mathbb{N}\}$ of $L^2(M)$ consisting of eigenvectors with corresponding 
eigenvalues $\lambda_j\geq 0$, i.e., 
$\mathcal{C}\near(D_i,\Cov[g_i])f_j=\lambda_jf_j$ for $j\in\mathbb{N}$ and $i\in\{1,2\}$.
%\todo{Since $L^2(M)$ is separable, we can choose the $\mathcal{I}=\mathbb{N}$, and 
% also $\mathcal{J}=\mathcal{I}=\mathbb{N}$.}
   
We now prove that $\mathcal{C}\near(D_i,\Cov[g_i])$ is injective such that $\lambda_j>0$ for 
all $j\in\mathbb{N}$. Let $\mathcal{C}\near(D_i,\Cov[g_i])\psi=0$ for $\psi\in L^2(M)$. As
\be
\label{4.22}
\mathcal{C}\near(D_i,\Cov[g_i])=\NopD{D_i}\Cov[g_i]\NopD{D_i}^*,\;i=1,2,
\en
we have
\ben
0 =
\langle \NopD{D_i}\Cov[g_i]\NopD{D_i}^*\psi,\psi \rangle_{L^2(M)}=\langle \Cov[g_i]\NopD{D_i}^*\psi,\NopD{D_i}^*\psi \rangle_{L^2(\Omega)}.
\enn
From the coercivity of
$\Cov[g_i]$ on $L^2(\Omega)$, we get that $w_i:=\NopD{D_i}^*\psi$, vanishes 
on $\Omega$, i.e. 
\ben
0=\overline{w_i(x)}=\int_{M}\GreenD{D_i}(x,y)\overline{\psi(y)}{\rm d}s(y),\;\;x\in\Omega,
\enn
where $\GreenD{D_i}$ is the Dirichlet Green function with respect to the obstacle $D_i$. Since we have assumed that $\supp q_i$ contains an open set, 
by analytic continuation 
and the uniqueness of exterior Dirichlet problem for the Helmholtz equation, we obtain that $w_i(x)=0$ in $\mathbb{R}^d\backslash\overline{D}_i$. Thus, from the jump relations 
for the normal derivative of single-layer boundary integral operators in \cite{Mclean2000} and 
smoothness of $\GreenD{D_i}(x,y)-\Phi(x,y)= \Green^{\mathrm{s}}(x,y)$,
%\todo[inline]{I think you don't need the double layer potential operator here, just smoothness.}
we have $\overline{\psi}=\frac{\partial w^{-}_i(x)}{\partial\nu}-\frac{\partial w^{+}_i(x)}{\partial\nu}=0$. (Here, the superscripts $+$ and $-$ indicate the limit values of function $w_i(x)$ approaching $M$ from the external and internal side, respectively.) Therefore, $\psi=0$. 

%Define a set $\mathcal{J}:=\{\lambda_j\neq 0:j\in\mathcal{I}\}$ which is infinity due to the injectivity of $\mathcal{C}\near(D_i,q_i)$ and 
As $\lambda_j>0$ we can define functions
\ben
v_{i,j}(x):=\frac{1}{\lambda_j}\int_{\Omega}\GreenD{D_i}(x,y)(\Cov[g_i]\NopD{D_i}^*f_j)(y){\rm d}y,\;j\in\mathbb{N}, \,i=1,2,\;
x\in \mathbb{R}^d\backslash(\overline{D_1}\cup \overline{D_2})
\enn
and due to the identity \eqref{4.22}
we get
\ben
v_{1,j}|_{M}=\frac{1}{\lambda_j}\mathcal{C}\near(D_1,\Cov[g_1])f_j=\frac{1}{\lambda_j}\mathcal{C}\near(D_2,\Cov[g_2])f_j=v_{2,j}|_{M},\; j\in\mathbb{N}.
\enn
By analytic continuation we get
\be\label{3.5}
v_{1,j}=v_{2,j}\;\text{ in } \mathbb{R}^d\backslash(\overline{D_1}\cup \overline{D_2}\cup\overline{\Omega}),\;j\in\mathbb{N}.
\en
In what follows, we will prove that these functions are linearly independent in $\mathbb{R}^d\backslash(\overline{D_1}\cup \overline{D_2}\cup\overline{\Omega})$. Assume that $\sum_{j\in\mathbb{N}}c_jv_{i,j}=0$ in $\mathbb{R}^d\backslash(\overline{D_1}\cup \overline{D_2}\cup\overline{\Omega})$ for some $c_j\in\mathbb{R}$. Restricting $\sum_{j\in\mathcal{J}}c_jv_{i,j}$ to $M$, we obtain
\ben
0=\sum_{j\in\mathcal{J}}c_jv_{i,j}
=\mathcal{C}\near(D_i,\Cov[g_i])\sum_{j\in\mathbb{N}}\frac{c_j}{\lambda_j}f_j.
\enn
As $\mathcal{C}\near(D_i,\Cov[g_i])$ is injective, this implies 
$\sum_{j\in\mathbb{N}}\frac{c_j}{\lambda_j}f_j= 0$, and hence $c_j=0$.

%Building on identity \eqref{3.5} and the linear independence of 
%$v_{i,j}$ for 
%$j\in\mathbb{N}$, we will now demonstrate that the intersection $D_1\cap D_2$ is non-empty. If  $D_1\cap D_2=\varnothing$, we may define
%\ben
%v_j(x):=
%\begin{cases}
%v_{1,j}(x)& x\in D_2\\
%v_{2,j}(x)& x\in D_1\\
%v_{1,j}(x)\;(=v_{2,j}(x))& x\in \mathbb{R}^d\backslash(\overline{D_1}\cup \overline{D_2}).
%v_{1,j}(x)\;(v_{2,j}(x))& x\in \mathbb{R}^d\backslash(\overline{D_1}\cup \overline{D_2}\cup\overline{\Omega}).
%\end{cases}
%\enn
%Then, $v_j\in H^{2}_{\mathrm{loc}}(\mathbb{R}^d)$ is an entire radiating solution defined in $\mathbb{R}^d$, implying that $v_{j}\equiv0$. Constraining $v_j$ to $M$ conflicts with the linear independence of $v_j$ for $j\in\mathbb{N}$, thus $D_1$ and $D_2$ must intersect.

%Combining the above proof with the assumption $D_1\neq D_2$, then we can assume 
Suppose that $D_1\neq D_2$. Then we may assume 
without loss of generality 
that $D^*:=(D_1\cup D_2)\backslash D_2$ is non-empty. Hence, $v_{2,j}=0$ on $\partial D^*$ for all $j\in\mathbb{N}$. As $v_{2,j}$ satisfies the Helmholtz equation in $D^*$, it follows that 
$v^j_{D_2,q_2}$ is a Dirichlet eigenfunction for the negative Laplacian in $D^*$ with corresponding eigenvalue $\kappa^2$
for each $j\in\mathbb{N}$. This leads to a contradiction to the Rellich selection theorem, i.e., the compactness 
of the embedding $H_0^1(D^*)\hookrightarrow L^2(D^*)$ 
since it implies that all eigenspaces of the negative 
Dirichlet-Laplacian are finite-dimensional. 
This shows that $D_1=D_2$. 

Thus, we get $\mathcal{C}\near(D,\Cov[g_1]) = \mathcal{C}\near(D,\Cov[g_2])$. If the source processes $g_i$ are spatially uncorrelated, it follows from the uniqueness results in Theorem \ref{source} that $q_1=q_2$, thus completing the proof. 
\end{proof}
We point out that the last part of the proof follows Schiffer's 
uniquenss proof for classical sound-soft inverse obstacle scattering problems (see, e.g., \cite[Thm.~5.1]{Colton2019}).

\section{Fr\'echet derivative of the forward operator and its adjoint}
Whereas we could show uniqueness for very general domains, for reconstructions we have to confine ourselves to a numerically more feasilble class of
domains. In particular, for iterative reconstruction methods the topology of the 
unknown scattering domain has to be known a-priori. 

\subsection{Formulation of the simplified forward operator}
Consider some smooth reference domain 
$D_{\mathrm{ref}}\subset\mathbb{R}^d$ (e.g., a ball or a torus). We will represent candidate 
domains $D=D_p$ by parameterizations
\[
p=P(\shape): \partial D_{\mathrm{ref}}\to \partial D_p,\qquad \shape\in\shapeman,
\]
where $p$ belongs to some shape manifold 
$\shapeman$ of admissible parameterizations. 
E.g.,\ if we confine ourselves to star-shaped domains with respect to the origin, we may 
choose $\partial D_{\mathrm{ref}}=S^{d-1}$, 
$\shapeman = \{\shape\in H^s(S^{d-1}):\shape>0\}$ for 
$s>(d+1)/2$, and $p(\hat{x})=(P(\shape))(\hat{x}):=\shape(\hat{x})\hat{x}$. A simple way to treat non-star-shaped domains is to allow all vector-valued functions 
on $\partial D_{\mathrm{ref}}$ which are either 
sufficiently small or satisfy a constraint 
preventing self-penetration. In $\mathbb{R}^2$, 
a shape manifold $\shapeman$ containing all smooth, closed curves in $\mathbb{R}^2$ has 
been described in \cite{Eckhardt2019(2)}. 

We assume that $\shapeman$ is an open 
subset of a Hilbert space or, more generally, a smooth Hilbert-space manifold and that 
\[
P:\shapeman \to H^{s}(\partial D_{\mathrm{ref}}; \mathbb{R}^d)
\]
with $s>(d+1)/2$ is differentiable. Note that by Sobolev embeddings, the condition on $s$ implies 
that $P(\shape)\in C^1(\partial D_{\mathrm{ref}}; \mathbb{R}^d)$ for all $\shape\in\shapeman$. 

For numerical reconstructions, we not only confine ourselves to a class of obstacles parameterized via 
$\shape\in\shapeman$, but we also assume uncorrelated sources $g$ with covariance operator 
$\Cov[g]= M_q$ for some $q\in L^{\infty}(\Omega)$. 

Finally, we have to specify a norm, ideally a Hilbert norm, in the image space of the forward operator, 
the space of covariance operators. 
A natural choice is the space $\HS(L^2(M))$ of Hilbert-Schmidt operators on $L^2(M)$ introduced in the appendix. 

In summary, the forward operator $\mathcal{C}(\shape,q):=\mathcal{C}(D_p,M_q)$ in our simplified, overloaded 
notation is given by  
\be\label{4.4}
\mathcal{C}: \shapeman\times L^{\infty}{(\Omega)} \to 
{\rm HS}(L^2(M)),\qquad 
\mathcal{C}\near(\shape,q)=
 \NopD{D_{p}} M_q \NopD{D_{p}}^*.
\en

If $X$ is an $L^2$-space, then the Hilbert-Schmidt norm of operators on this space is given by 
the $L^2$-norm of the Schwartz kernels, in particular 
\ben
\|\mathcal{C}\near(\shape,q)\|_{{HS}}=\|c_{D_p,M_q}\|_{L^2(M\times M)}.
\enn
As the covariance function $c_{D_p,M_q}$ is the input of our inverse problems, this shows 
that $\HS$ is a natural choice for the image space. 

\subsection{Fr\'echet derivative} 
By the chain rule and well known results on 
domain derivatives (see \cite{Hohage1997,Kress1999,Hettlich1995,Hettlich1998}), 
the operator 
\[
\tilde{\mathcal{G}}: \shapeman\to 
L(L^{2}(\Omega), L^2(M)),\qquad  
\tilde{\mathcal{G}}(\shape):=
\NopD{D_{p}}
\] 
is Fr\'echet differentiable with derivative 
$\tilde{\mathcal{G}}'[\shape](\partial\shape)
= \NopD{D_{p}}' \circ P'[\shape]\partial\shape$, 
where $\NopD{D_{p}}'(h): L^{2}(\Omega)\rightarrow L^2(M)$ is given by 
\be\label{eq:shape_der}
\paren{\paren{\NopD{D_{p}}'(h)}\varphi}(x)
=\int_{\Omega}u'_{h,y}(x)\varphi(y)\dy. 
\en
%\todo[inline]{
%$\Phi$ should be deleted in the formula above!
%Maybe for a good characterization of the adjoint 
%we have to factorize \\
%$ 
%\tilde{\mathcal{G}}'[\shape](\partial\shape)
%=-\mathcal{G}_{\partial D_p\to M} M_{P'(\shape)\partial\shape\cdot\nu} \mathcal{G}_{\Omega\to\partial D_p}
%$ \\
%and use the $\mathrm{Diag}$ from Bj\"orn's paper. 
%It is obvious that we are in trace class spaces here, 
%and it is easy to show that $\mathrm{Diag}^*h =M_h$, 
%and $\mathrm{Diag}^*$ takes values in 
%$\mathcal{S}_1'(L^2(\partial D_p)) = L(L^2(\partial D_p))$. 
%}
Here, $u_{h,y}'$ is the unique solution to the following exterior boundary value problem
\be
\label{4.2}
\begin{cases}
\Delta u_{h,y}'+\kappa^2 u_{h,y}'=0\quad & {\rm in\; }\mathbb{R}^d\backslash\overline{D}_{p},\\
u_{h,y}'=-(h\cdot\nu)\;\frac{\partial \GreenD{D_{p}}(\cdot,y)}{\partial\nu}\quad &{\rm on\;}\partial D_{p},\\
\displaystyle{\lim_{r\to\infty}}r^{\frac{d-1}{2}}\left(\frac{\partial u_{h,y}'}{\partial r}-i\kappa u_{h,y}'\right)=0\quad&{\rm for\;} r=|x|,
\end{cases}
\en
and $\GreenD{D_{p}}$ is the Dirichlet Green's function of Problem \ref{2.3} associated with obstacle $D_{p}$.

% Within this section, we use a parameterization function $p\in C^2$ to describe the regular, closed curve or surface of the obstacle $D$ in $\mathbb{R}^d$. Meanwhile, we consider the source processes $g_i$ are spatially uncorrelated with ${\rm Cov}[g]=M_q$.

%Define a two-variable near-field operator $\Nop :C^2(\partial D_{p})\times L^{\infty}(\Omega)\rightarrow L^2(M)$ with $(p,\phi)\rightarrow (\NopD{D_{p}}\phi)|_{M}$, and is given by
% \ben
% (\NopD{D_{p}}\phi)(x):=\int_{\Omega}\GreenD{D_{p}}(x,y)\phi(y){\rm d}y,\; x\in M,
% \enn
% where $\GreenD{D_{p}}(x,y)$ is Dirichlet Green's function of Problem \eqref{2.3} with respect to the obstacle $D_{p}$.

% For a fixed position $x$ in the measurement curve or surface $M$, the observable random
% total field $u$ of Problem \eqref{us} is defined by
% \ben
% u(x)=(\NopD{D_{p}}g)(x)=\int_{\Omega}\GreenD{D_{p}}(x,y)g(y){\rm d}y,\;x\in M,
% \enn
% where $\GreenD{D_{p}}(x,y)$ is the Dirichlet Green's function of Problem \eqref{2.3}.

% Then, the observable correlation data for two positions $x_1, x_2\in M$ is given by
% \ben
% c_{D_{p},M_q}(x_1,x_2):&=&{\rm Cov}(u(x_1),u(x_2))=
% \langle M_q \GreenD{D_{p}}(x_1,\cdot),\GreenD{D_{p}}(x_2,\cdot)\rangle\\
% &=&\int_{\Omega}\GreenD{D_{p}}(x_1,y)q(y)\overline{\GreenD{D_{p}}(x_2,y)}{\rm d}y,
% \enn
% and the integral operator $\mathcal{C}\near(D_p,M_q):L^2(M)\rightarrow L^2(M)$ is defined by
% \ben
% (\mathcal{C}\near(D_p,M_q)v)(x_1)=\int_{M}c_{D_{p}}(x_1, x_2)v(x_2){\rm d}x_2,
% \enn
% which can be equivalently represented by
% \be
% \label{4.4}
% \mathcal{C}\near(D_p,M_q)=
% \NopD{D_{p}} M_q \NopD{D_{p}}^*.
% \en

Based on \eqref{eq:shape_der} and the chain rule we obtain:

\begin{prop}
The forward operator defined in \eqref{4.4} is 
Fr\'{e}chet differentiable, and the derivative is given by
\be\label{eq:cprime_chain_rule}
\mathcal{C}\near'[\shape,q](\partial \shape,\partial q)
= 2\mathrm{Re} \paren{\paren{\NopD{D_p}'
% D_{P(\shape)}
\paren{ P'[\shape]\partial\shape}} M_{q} \NopD{D_p}^{*}}
+\mathcal{C}\near(\shape,\partial q),
\en
where $\mathrm{Re}:\mathrm{HS}(L^2(M))\to \mathrm{HS}(L^2(M)))$ is given 
by $\mathrm{Re}(\mathcal{K}):=\frac{1}{2}\paren{\mathcal{K}+\mathcal{K}^*}$.
Moreover, 
\be\label{eq:Gprime}
\NopD{D_p}'(h)
=\mathcal{G}_{\partial D_p\to M} M_{-h\cdot\nu} \mathcal{G}_{\Omega\to\partial D_p} \qquad \text{for }h\in L^2(\partial D)
\en
where  $\mathcal{G}_{\Omega\to\partial D_p}: L^2(\Omega)\to L^2(\partial D_p)$ and 
$\mathcal{G}_{\partial D_p\to M}:L^2(\Omega)\to L^2(\partial D_p)$ are the integral operators with kernels 
$\frac{\partial\GreenD{D_{p}}(x,y)}{\partial\nu(x)}$, $x\in \partial D_p$, $y\in\Omega$ amd 
$\frac{\partial\GreenD{D_{p}}(x,y)}{\partial\nu(y)}$, $y\in \partial D_p$, $x\in M$, respectively. 
\end{prop}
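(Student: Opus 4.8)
The plan is to read the forward operator $\mathcal{C}\near(\shape,q)=\NopD{D_p}M_q\NopD{D_p}^{*}$ of \eqref{4.4} as a composition of three pieces — the shape map $\shape\mapsto\NopD{D_p}$, the linear multiplication map $q\mapsto M_q$, and the trilinear ``sandwich'' $(A,B,C)\mapsto A\,B\,C^{*}$ involving the adjoint — and to differentiate by the product and chain rules. Fréchet differentiability of $\tilde{\mathcal{G}}(\shape)=\NopD{D_p}$ with $\tilde{\mathcal{G}}'[\shape](\pa\shape)=\NopD{D_p}'\circ P'[\shape]\pa\shape$ is already supplied by \eqref{eq:shape_der}, while $q\mapsto M_q$ is linear and bounded. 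Since operator composition and the adjoint are continuous (bi)linear operations on the relevant spaces, and since composing a bounded operator with a Hilbert--Schmidt operator stays Hilbert--Schmidt (so every factor $\NopD{D_p}$, $\NopD{D_p}'(h)$, being a smoothing kernel operator between the well-separated sets $\Omega$, $M$, is Hilbert--Schmidt), the rules apply and yield a derivative in $\mathrm{HS}(L^2(M))$.

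Carrying out the product rule produces three contributions,
\[
\tilde{\mathcal{G}}'[\shape](\pa\shape)\,M_q\,\NopD{D_p}^{*}
\;+\;\NopD{D_p}\,M_{\pa q}\,\NopD{D_p}^{*}
\;+\;\NopD{D_p}\,M_q\,\bigl(\tilde{\mathcal{G}}'[\shape](\pa\shape)\bigr)^{*}.
\]
The middle term is exactly $\mathcal{C}\near(\shape,\pa q)$ by linearity of $\mathcal{C}$ in $q$. For the outer two, I would set $\mathcal{K}:=\bigl(\NopD{D_p}'(P'[\shape]\pa\shape)\bigr)M_q\NopD{D_p}^{*}$. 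Because $q$ is real-valued, $M_q^{*}=M_q$, so $\mathcal{K}^{*}=\NopD{D_p}M_q\bigl(\NopD{D_p}'(P'[\shape]\pa\shape)\bigr)^{*}$, which is precisely the third contribution; hence the first and third terms sum to $\mathcal{K}+\mathcal{K}^{*}=2\,\mathrm{Re}(\mathcal{K})$, giving \eqref{eq:cprime_chain_rule}.

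For the factorization \eqref{eq:Gprime}, the heart of the matter is a Green's representation of the solution $u'_{h,y}$ of \eqref{4.2}. Applying Green's second identity in $\mathbb{R}^d\setminus\overline{D_p}$ to $u'_{h,y}$ and to $\GreenD{D_p}(x,\cdot)$ for a fixed evaluation point $x\in M$, using that both radiate (so the sphere-at-infinity contribution vanishes) and that $\GreenD{D_p}(x,\cdot)=0$ on $\partial D_p$, only one boundary term survives and yields
\[
u'_{h,y}(x)=\int_{\partial D_p}\frac{\pa \GreenD{D_p}(x,z)}{\pa\nu(z)}\,u'_{h,y}(z)\,\ds(z).
\]
Substituting the Dirichlet data $u'_{h,y}(z)=-(h\cdot\nu)(z)\,\frac{\pa\GreenD{D_p}(z,y)}{\pa\nu(z)}$ from \eqref{4.2}, inserting into \eqref{eq:shape_der}, and interchanging the $(z\in\partial D_p)$- and $(y\in\Omega)$-integrations by Fubini (legitimate as $\partial D_p$, $\Omega$, $M$ are pairwise disjoint and all kernels continuous and bounded), I would read off the composition $\mathcal{G}_{\partial D_p\to M}\,M_{-h\cdot\nu}\,\mathcal{G}_{\Omega\to\partial D_p}$ with the stated kernels, using reciprocity $\GreenD{D_p}(z,y)=\GreenD{D_p}(y,z)$ to match the kernel of $\mathcal{G}_{\Omega\to\partial D_p}$.

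I expect the main obstacle to be the rigorous justification of this representation formula rather than the algebra: one must fix the orientation of $\nu$ and track signs through Green's identity (the outward normal of the exterior domain is $-\nu$, which is what produces the final sign), confirm the decay of the boundary-at-infinity integral from the Sommerfeld conditions of both $u'_{h,y}$ and $\GreenD{D_p}(x,\cdot)$, and ensure $u'_{h,y}$ is regular enough near $\partial D_p$ for the traces and the identity to be meaningful — this last point is guaranteed because the boundary datum $\frac{\pa\GreenD{D_p}(\cdot,y)}{\pa\nu}$ is smooth for $y\in\Omega$ away from $\partial D_p$. The differentiability bookkeeping (that each remainder is $o(\|(\pa\shape,\pa q)\|)$ in the Hilbert--Schmidt norm) is then routine once continuity of composition and the adjoint on the operator spaces is invoked.
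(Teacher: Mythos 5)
Your proposal is correct and follows essentially the same route as the paper: a product/chain-rule computation for \eqref{eq:cprime_chain_rule} (including the observation that $M_q^*=M_q$ for real $q$, which produces the $2\,\mathrm{Re}$ term), and for \eqref{eq:Gprime} the identification of $\mathcal{G}_{\partial D_p\to M}f$ as the trace on $M$ of the radiating exterior solution with Dirichlet data $f$. The only difference is that the paper simply cites this last representation fact (with a reference for $L^2$ boundary data), whereas you prove it via Green's second identity with the correct sign bookkeeping for the exterior normal — a legitimate filling-in of the detail rather than a different argument.
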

\begin{proof}
Eq.~\eqref{eq:cprime_chain_rule} is a straightforward application of the chain rule. 
Eq.~\eqref{eq:Gprime} follows from the characterization \eqref{eq:shape_der}, \eqref{4.2} after noting 
that $\mathcal{G}_{\partial D_p\to M}f= w|_M$ is the restriction to $M$ of the radiating solution $w$ to 
the Helmholtz equation in $\mathbb{R}^d\setminus \overline{D_p}$ with boundary values $w|_{\partial D_p} =f$. 
(The boundary condition can be interpreted in the sense of a trace operator if $f\in H^{1/2}(\partial D_p)$, 
otherewise, for $f\in L^2(\partial D_p)$, see \cite{kersten:80}.)
%Based on Theorem \ref{theorem4.1} and the chain rule, we can readily obtain this result.
\end{proof}

\subsection{Adjoint of Fr\'echet derivative}\label{sec:adjoint} 
In regularization methods we also need the adjoint of the Fr\'echet derivative of the forward operator. 
To characterize these, we will use the fact shown in \cite[Prop.~3]{Mueller2023} that for any 
open set $A\subset\mathbb{R}^d$, or more generally any finite dimensional Riemannian manifold $A$, there exists 
a unique operator
\[
\Diag= \Diag_A: S_1(L^2(A))\to L^1(A)
\]
such that 
\[
(\Diag \mathcal{K})(x) = K(x,x),\qquad x\in A
\]
for all operators $\mathcal{K}\in S_1(L^2(A))$ with continuous Schwartz kernel $K$. Moreover, 
$\operatorname{tr} \mathcal{K} = \int_A (\Diag K)(x)\,\mathrm{d}x$. 
From this we can easily derive the following infinite dimensional 
analog of the fact that the mappings $\diag:\mathbb{C}^n\to\mathbb{C}^{n\times n}$ mapping a vector to a diagonal matrix and the mapping 
$\Diag:\mathbb{C}^{n\times n}\to \mathbb{C}^n$ taking the diagonal 
of a matrix are adjoint to each other with respect to the standard 
inner product in $\mathbb{C}^n$ and the Frobenius inner product 
in $\mathbb{C}^{n\times n}$. With this we will be able to easily 
derive the form of the adjoint. 

\begin{lem}\label{lem:Diag}
The adjoint of the operator $\Diag$ above is given by 
\[
\Diag^*: L^{\infty}(A)\to S_1(L^2(A))'=L(L^2(A)), \qquad \Diag^*(f)=M_f
\]
with a multiplication operator $M_f(g)= f\cdot g$ for $g\in L^2(A)$. In particular, the adjoint 
of the operator $\mathcal{D}:=\Diag^*$ restricted to $S_1(L^2(A))\subset S_1(L^2(A))''$ coincides 
with $\Diag$ and hence takes values in $L^1(A)\subset L^{\infty}(A)'$.
\end{lem}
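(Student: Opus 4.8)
The plan is to read off both assertions from the duality identifications recalled in the appendix, namely $S_1(L^2(A))' = L(L^2(A))$ via the trace pairing $\langle \mathcal{K},T\rangle = \operatorname{tr}(T\mathcal{K})$ and $L^1(A)' = L^\infty(A)$ via $\langle \Diag\mathcal{K}, f\rangle = \int_A f\,\Diag\mathcal{K}\dx$. With these, $\Diag^*=\Diag': L^\infty(A)\to L(L^2(A))$ is the Banach-space adjoint, and to prove $\Diag^*(f)=M_f$ it suffices to verify, for all $f\in L^\infty(A)$ and all $\mathcal{K}\in S_1(L^2(A))$, the scalar identity
\[
\int_A f\,\Diag\mathcal{K}\dx = \operatorname{tr}(M_f\mathcal{K}),
\]
since the right-hand side is exactly the action of $M_f\in L(L^2(A))$ on $\mathcal{K}$ under the trace pairing. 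Here $M_f$ is bounded on $L^2(A)$ with $\|M_f\|\le\|f\|_{L^\infty}$, and $M_f\mathcal{K}\in S_1(L^2(A))$ because $S_1$ is an ideal, so both sides are well defined.

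The core step is the operator identity $\Diag(M_f\mathcal{K}) = f\cdot\Diag(\mathcal{K})$ in $L^1(A)$; once it is available, combining it with the trace formula $\operatorname{tr}(\mathcal{T})=\int_A\Diag(\mathcal{T})\dx$ from \cite[Prop.~3]{Mueller2023} applied to $\mathcal{T}=M_f\mathcal{K}$ yields the displayed scalar identity at once. For $f\in C(\overline{A})$ and $\mathcal{K}$ with continuous kernel $K$ the identity is obvious, since $M_f\mathcal{K}$ has kernel $f(x)K(x,y)$ and hence diagonal $f(x)K(x,x)=f(x)(\Diag\mathcal{K})(x)$. To reach arbitrary $f\in L^\infty(A)$ I would use the canonical (Schmidt) decomposition $\mathcal{K}=\sum_n s_n\langle\cdot,\phi_n\rangle\psi_n$ with $\sum_n s_n<\infty$ and orthonormal systems $\{\phi_n\},\{\psi_n\}$: on rank-one operators $\Diag(\langle\cdot,\phi\rangle\psi)=\psi\overline{\phi}$, so that $\Diag(\mathcal{K})=\sum_n s_n\psi_n\overline{\phi_n}$, while $M_f\mathcal{K}=\sum_n s_n\langle\cdot,\phi_n\rangle(f\psi_n)$ gives $\Diag(M_f\mathcal{K})=\sum_n s_n (f\psi_n)\overline{\phi_n}=f\,\Diag(\mathcal{K})$. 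The series converge in $L^1(A)$ because $\|\psi_n\overline{\phi_n}\|_{L^1}\le\|\psi_n\|_{L^2}\|\phi_n\|_{L^2}=1$ with $\sum_n s_n<\infty$, and multiplication by $f$ is continuous on $L^1(A)$ with norm $\|f\|_{L^\infty}$; boundedness of $\Diag:S_1\to L^1$ guarantees that this representation of $\Diag(\mathcal{K})$ is consistent with the defining one for continuous kernels. I expect this to be the part requiring the most care, essentially because continuous functions are not dense in $L^\infty(A)$, so the passage from continuous $f$ to general $f$ must be carried out on $\mathcal{K}$ rather than on $f$.

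Finally, the second assertion is a formal consequence of the general relation between an operator and its double adjoint. Writing $T=\Diag: X\to Z$ with $X=S_1(L^2(A))$ and $Z=L^1(A)$, the adjoint of $\mathcal{D}=\Diag^*=T'$ is $T''=(T')':X''\to Z''$, and under the canonical embeddings $J_X: X\hookrightarrow X''=L(L^2(A))'$ and $J_Z:Z\hookrightarrow Z''=L^\infty(A)'$ one has $T''\circ J_X = J_Z\circ T$. Thus the restriction of $\mathcal{D}'=T''$ to the canonical image of $S_1(L^2(A))$ in $L(L^2(A))'$ coincides with $\Diag$ and, in particular, takes values in $J_Z(L^1(A))$, i.e.\ in $L^1(A)\subset L^\infty(A)'$. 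The only point to verify here is that these two canonical embeddings are the standard identifications already fixed by the trace and integration pairings, which is immediate from their definitions.
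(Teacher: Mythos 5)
Your proposal is correct and follows the same overall strategy as the paper's proof: both reduce the claim to the scalar identity $\int_A f\,\Diag\mathcal{K}\dx=\operatorname{tr}(M_f\mathcal{K})$ (up to the cosmetic difference that the paper uses the sesquilinear pairing $\operatorname{tr}(\mathcal{K}_1^*\mathcal{K}_2)$, so a conjugate appears there), and both obtain it from the identity $\Diag(M_f\mathcal{K})=f\cdot\Diag(\mathcal{K})$ together with $\operatorname{tr}=\int_A\Diag(\cdot)\dx$. Where you diverge is in how that identity is justified: the paper simply writes $\overline{f(x)}K(x,x)=\Diag(M_f^*\mathcal{K})(x)$ at the level of Schwartz kernels, which is transparent only for continuous $f$ and continuous $K$, whereas you prove it for arbitrary $f\in L^{\infty}(A)$ and arbitrary $\mathcal{K}\in S_1$ via the Schmidt decomposition, using $\Diag(\langle\cdot,\phi\rangle\psi)=\psi\overline{\phi}$, absolute convergence of $\sum_n s_n\psi_n\overline{\phi_n}$ in $L^1$, and consistency with the defining property of $\Diag$ from \cite[Prop.~3]{Mueller2023}. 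This buys genuine additional rigor precisely where the paper is terse — continuous functions are not dense in $L^{\infty}$, so the limiting argument must indeed be run on $\mathcal{K}$, as you observe. You also spell out the bidual statement via $T''\circ J_X=J_Z\circ T$, which the paper's proof leaves entirely implicit. No gaps.
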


\begin{proof}
For the duality $S_1(X)'=L(X)$ in a Hilbert space $X$ with duality pairing $\langle \mathcal{K}_1,\mathcal{K}_2\rangle = \operatorname{tr}(\mathcal{K}_1^*\mathcal{K}_2)$ 
for $\mathcal{K}_1\in L(X)$ and $\mathcal{K}_2\in S_1(X)$ we refer to \cite{Reed1980}. For $\mathcal{K}\in S_1(L^2(A))$ with kernel $K$ and $f\in L^{\infty}(A)$
we have 
\begin{align*}
\langle f, \Diag \mathcal{K} \rangle 
&= \int_A \overline{f(x)}  (\Diag \mathcal{K})(x) \,\mathrm{d}x
= \int_A \overline{f(x)}K(x,x)\,\mathrm{d}x\\
&= \int_A \Diag(M_f^*\mathcal{K})\,\mathrm{d}x
= \operatorname{tr} (M_f^*\mathcal{K}) = \langle M_f,\mathcal{K}\rangle.
\end{align*}
This shows that $\Diag^* f = M_f$. 
\end{proof}

\begin{prop}
The adjoint operator $\mathcal{C}\near'[\shape,q]^*: {\rm HS}(L^2(M))\rightarrow T_{\shape}\shapeman\times L^{\infty}(\Omega)'$ of $\mathcal{C}\near'[\shape,q]$ takes values in $T_{\shape}\shapeman\times L^1(\Omega)$ and is given by
\be\label{eq:adjoint}
\mathcal{C}\near'[\shape,q]^{*}\mathcal{K}=\left(\begin{array}{l}
2 P'[\shape]^*(\NopD{D_{p}}')^{*}({\rm Re}(\mathcal{K})\NopD{D_{p}}M_q) \\
\Diag_{\Omega}(\NopD{D_{p}}^*\mathcal{K}\NopD{D_{p}})
\end{array}\right),
\qquad \mathcal{K}\in\mathrm{HS}(L^2(M))
\en
where
\ben
(\NopD{D_{p}}')^{*}(\mathcal{J}) = -\nu \Diag_{\partial D_p}(\mathcal{G}_{\partial D_p\to M}^*
\mathcal{J} \mathcal{G}_{\Omega\to\partial D_p}^*),
\qquad \mathcal{J}\in \mathrm{HS}(L^2(\Omega),L^2(M)).
\enn
\end{prop}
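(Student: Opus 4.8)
The plan is to compute the two components of the adjoint separately, exploiting the additive splitting of $\mathcal{C}\near'[\shape,q]$ in \eqref{eq:cprime_chain_rule} into a shape part and a source part. Throughout, the computation rests on three ingredients: the trace duality $\langle\mathcal{A},\mathcal{B}\rangle_{\mathrm{HS}}=\operatorname{tr}(\mathcal{A}^{*}\mathcal{B})$ on $\mathrm{HS}(L^2(M))$, the cyclicity of the trace, and Lemma \ref{lem:Diag}, which identifies $\Diag^{*}$ with a multiplication operator. A point that must be handled with care from the outset is that the admissible directions $(\partial\shape,\partial q)$ are \emph{real} while $\mathrm{HS}(L^2(M))$ is a complex Hilbert space; since $\mathcal{C}\near(\shape,q)$ takes values among the self-adjoint operators, the adjoint is understood with respect to the real inner product $\mathrm{Re}\,\langle\cdot,\cdot\rangle_{\mathrm{HS}}$. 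This is exactly what forces the appearance of $\mathrm{Re}(\mathcal{K})$ and lets one replace $\mathcal{K}$ by $\mathrm{Re}(\mathcal{K})$ whenever it is paired against a self-adjoint operator.

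For the source component I would test the map $\partial q\mapsto \mathcal{C}\near(\shape,\partial q)=\NopD{D_p}M_{\partial q}\NopD{D_p}^{*}$ against $\mathcal{K}$. Since $\NopD{D_p}M_{\partial q}\NopD{D_p}^{*}$ is self-adjoint for real $\partial q$, one obtains $\langle \NopD{D_p}M_{\partial q}\NopD{D_p}^{*},\mathcal{K}\rangle_{\mathrm{HS}}=\operatorname{tr}\big(\NopD{D_p}M_{\partial q}\NopD{D_p}^{*}\mathcal{K}\big)$, and cyclicity moves the two factors $\NopD{D_p}$ next to $\mathcal{K}$ to leave $\operatorname{tr}\big(M_{\partial q}\,\NopD{D_p}^{*}\mathcal{K}\NopD{D_p}\big)$. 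Here $\NopD{D_p}^{*}\mathcal{K}\NopD{D_p}$ is trace class because $\NopD{D_p}\in\mathrm{HS}(L^2(\Omega),L^2(M))$, so Lemma \ref{lem:Diag} turns this trace into the $L^2(\Omega)$-pairing of $\partial q$ with $\Diag_{\Omega}(\NopD{D_p}^{*}\mathcal{K}\NopD{D_p})$. This yields the second entry of \eqref{eq:adjoint}; the diagonal lies in $L^1(\Omega)$ precisely because the operator is trace class, and it is real-valued once the real structure is taken into account.

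The shape component is the more involved one. Starting from $2\,\mathrm{Re}\big(\NopD{D_p}'(h)M_q\NopD{D_p}^{*}\big)$ with $h=P'[\shape]\partial\shape$, I first use that pairing a self-adjoint operator against $\mathcal{K}$ in the real inner product is the same as pairing it against $\mathrm{Re}(\mathcal{K})$, which unfolds the expression to $2\,\mathrm{Re}\,\operatorname{tr}\big(\NopD{D_p}'(h)M_q\NopD{D_p}^{*}\,\mathrm{Re}(\mathcal{K})\big)$. Reading this as $2\,\mathrm{Re}\,\langle\,\mathrm{Re}(\mathcal{K})\NopD{D_p}M_q,\,\NopD{D_p}'(h)\rangle_{\mathrm{HS}}$ isolates the Hilbert--Schmidt operator $\mathcal{J}:=\mathrm{Re}(\mathcal{K})\NopD{D_p}M_q$, so it remains to transfer the perturbation $h$ through the adjoint $(\NopD{D_p}')^{*}$ of the bounded linear map $h\mapsto\NopD{D_p}'(h)$ and then through $P'[\shape]^{*}$ by the chain rule. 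This produces the first entry $2\,P'[\shape]^{*}(\NopD{D_p}')^{*}\big(\mathrm{Re}(\mathcal{K})\NopD{D_p}M_q\big)$.

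It remains to establish the formula for $(\NopD{D_p}')^{*}$. Using the factorization \eqref{eq:Gprime}, $\NopD{D_p}'(h)=\mathcal{G}_{\partial D_p\to M}M_{-h\cdot\nu}\mathcal{G}_{\Omega\to\partial D_p}$, I would pair against $\mathcal{J}\in\mathrm{HS}(L^2(\Omega),L^2(M))$ and again use cyclicity to push the two Green's operators to the outside, so that only the multiplication operator $M_{-h\cdot\nu}$ remains in the middle. The result is a trace $\operatorname{tr}\big(M_{-h\cdot\nu}\,\mathcal{L}\big)$ on $L^2(\partial D_p)$ with $\mathcal{L}:=\mathcal{G}_{\partial D_p\to M}^{*}\mathcal{J}\,\mathcal{G}_{\Omega\to\partial D_p}^{*}$, which is trace class since each of the three factors is Hilbert--Schmidt (the kernels of $\mathcal{G}_{\Omega\to\partial D_p}$ and $\mathcal{G}_{\partial D_p\to M}$ are continuous because $\Omega$, $\partial D_p$, and $M$ are mutually separated). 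Lemma \ref{lem:Diag} then rewrites the trace as $-\int_{\partial D_p}(h\cdot\nu)\,\Diag_{\partial D_p}(\mathcal{L})\,\mathrm{d}s$, and writing $(h\cdot\nu)\Diag_{\partial D_p}(\mathcal{L})=h\cdot\big(\nu\,\Diag_{\partial D_p}(\mathcal{L})\big)$ identifies $(\NopD{D_p}')^{*}\mathcal{J}=-\nu\,\Diag_{\partial D_p}(\mathcal{G}_{\partial D_p\to M}^{*}\mathcal{J}\,\mathcal{G}_{\Omega\to\partial D_p}^{*})$, as claimed. The main obstacle, and the place where I would be most careful, is the consistent bookkeeping of the complex conjugations introduced by the adjoints and by the asymmetry between $\Diag$ and $\Diag^{*}$: it is the real structure of the problem that reconciles these and explains why $\mathrm{Re}(\mathcal{K})$ enters the shape component while the bare $\mathcal{K}$ suffices in the source component.
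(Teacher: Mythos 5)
Your proposal is correct and follows essentially the same route as the paper: trace duality $\langle\mathcal{A},\mathcal{B}\rangle_{\HS}=\operatorname{tr}(\mathcal{A}^*\mathcal{B})$, cyclicity of the trace, and Lemma \ref{lem:Diag} to convert the resulting traces into $L^2$-pairings with $\Diag$ of a trace-class product of Hilbert--Schmidt factors. The paper only writes out the source component and declares the shape component ``analogous,'' so your explicit derivation of $(\NopD{D_p}')^*$ from the factorization \eqref{eq:Gprime}, and your careful accounting of the real inner product that produces the factor $2$ and $\mathrm{Re}(\mathcal{K})$, supply details the paper omits rather than a different argument.
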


\begin{proof}
We prove \eqref{eq:adjoint} only for the special case that $\rho$ is fixed 
as the general case then follows by straightforward computations. 
Writing  $q\mapsto\mathcal{C}\near(\shape,q) = \NopD{D_p}\Diag^*(q)\NopD{D_p}^*$, we find from Lemma \ref{lem:Diag} 
and the identity 
\[
\langle Y,AXB\rangle  =
\operatorname{tr}(Y^*AXB) = \operatorname{tr}(BY^*AX)
= \operatorname{tr}((B^*YA^*)^*X)) 
= \langle B^*YA^*,X\rangle
\]
using the invariance of the trace under cyclic permutations that 
\[
\mathcal{C}\near(\shape,\cdot)^*(\mathcal{K})
= \Diag_{\Omega}^{**}\left(\NopD{D_p}^*\mathcal{K}\NopD{D_p}\right)
= \Diag_{\Omega}\left(\NopD{D_p}^*\mathcal{K}\NopD{D_p}\right)
\]
for $\mathcal{K}\in\mathrm{HS}(L^2(M))$. 
In the last line we have used that $\NopD{D_p}$ is Hilbert-Schmidt 
as its kernel is square integrable, and hence 
$\NopD{D_p}^*\mathcal{K}\NopD{D_p}\in S_1(L^2(\Omega)$
as products of Hilbert-Schmidt operators are trace class. 

The derivation of the formula for $(\NopD{D_{p}}')^{*}$ is analogous. 
\end{proof}

\section{Inversion methods}

In this section, we discuss iterative regularization methods for reconstructing the source strength, shape and location of the obstacle, as well as for their simultaneous recovery.

\subsection{Discrete forward operator}

We discretize the source region $\Omega$ into $\Nsrc$ points $y_n,\;(n=1,\dots,\Nsrc)$ where $y_n\in\Omega_n$ and $\Omega_n$ are disjoint subdomains such that $\Omega=\bigcup_{n}\Omega_{n}$. The random source process $g$ can then be approximated by its projection
\ben
(P_{\Nsrc} g)(y)=\sum_{n=1}^{\Nsrc}\langle g,\phi_n\rangle\phi_n(y)
= \sum_{n=1}^{\Nsrc}\underline{\pi}_n\phi_n(y),\qquad y\in \Omega,
\enn
where $\phi_n=1_{\Omega_n}|\Omega_n|^{-1/2}$ are the $L^2$-normalized  
indicator functions of $\Omega_n$, and the coefficients 
$\underline{\pi}_n:=\langle g,\phi_n\rangle$ satisfy 
$\EV[\underline{\pi}_n]=0$ and $\EV(|\underline{\pi}_n|^2)=|\Omega_n|^{-1}\int_{\Omega_n}q(y)\dy\approx q(y_n)$. 
%Using the expression of $\ui$ in \eqref{ui_1}, we obtain the implementable approximation
%\ben
%\ui(x) \approx \int_{\Omega}\Phi(x,y)(P_{\Nsrc}g)(y)\dy
%=\sum_{n=1}^{\Nsrc}|\Omega_n|^{1/2}\Phi(x,y_n)\underline{\pi}_n
%\enn
From Proposition \ref{solution}, the total random field $u$ at $M$ can be approximated by 
\ben
u(x)\approx (\NopD{D_p} P_{\Nsrc} g)(x)
=\sum_{n=1}^{\Nsrc}\frac{\underline{\pi}_{n}}{|\Omega_n|^{1/2}}
\int_{\Omega_n}G_{D_p}(x,y)\dy,\qquad x\in M.
\enn
Let $x_l\in M$, $l=1, \ldots, {\Nmeas}$ denote the measurement points, which 
are chosen (approximately) equidistant on the measurement curve or 
surface $M$. To obtain approximations 
$\underline{u}_l\approx u(x_l)$, we further approximate 
$\int_{\Omega_n}G_{D_p}(x_l,y)\dy\approx |\Omega_n|G_{D_p}(x_l,y_n)$ 
to obtain
\ben
\underline{u}=\underline{\NopD{D_{p}}}\underline{\pi}
\qquad \text{with}\qquad
(\underline{\NopD{D_{p}}})_{ln}:=|\Omega_n|^{1/2}G_{D_p}(x_l,y_n), 
\enn
$\underline{u}:=(\underline{u}_1,\cdots,\underline{u}_{\Nmeas})^\top$, 
$\underline{\pi}:=(\underline{\pi}_1,\cdots,\underline{\pi}_{\Nsrc})^{\top}$, 
and $\underline{\NopD{D_{p}}}\in\mathbb{C}^{{\Nmeas}\times {\Nsrc}}$. 
The Green function $G_{D_p}$ is evaluated using boundary integral 
equation methods as described in \cite{Colton2019}. 
Based on Lemma \ref{lem:cov_fct_op}, we obtain the following discrete forward operator:
\ben
\underline{\mathcal{C}\near(\shape,q)}
:= \mathrm{Cov}(\underline{u})=\mathrm{Cov}\left(\underline{\NopD{D_{P(\shape)}}}\underline{\pi}\right)
=\underline{\NopD{D_{P(\shape)}}}\underline{M_q}\underline{\NopD{D_{P(\shape)}}^{*}},
\enn
where  $\underline{M_q}$ is the diagonal matrix defined by
\ben
\underline{M_q}=\mathrm{diag}(q(y_1),\cdots,q(y_{\Nsrc})).
\enn

\subsection{Synthetic data}
To generate synthetic data, we draw $\Nsample$ independent 
(complex-valued) proper normally distributed samples 
\[
\underline{\pi}^{(j)}\sim \mathcal{CN}(0,\underline{M}_q),
\qquad j=1,\dots,\Nsample
\]
%\todo[inline]{at Meng: Are you generating complex-valued sources in your simulations? If not, this might be a reason why you do not see so much improvement when using the correct data fidelity functional!}
%\todo[inline]{I use the complex-valued sources when compute the synthetic data. In the previous code library, I remember I choose more that three weighting techniques based on the reference.}
of the discretized source process and compute the corresponding 
primary data simulating measurements of total fields by 
\ben
\underline{u}^{(j)}:=\underline{\NopD{D_{p}}} \underline{\pi}^{(j)}
+ \underline{\epsilon}^{(j)}. 
\enn
Here $\underline{\epsilon}^{(j)}\sim \mathcal{CN}(0,\beta I)$ model independent measurement errors, and also independent of 
$\underline{\pi}^{(j)}$. The correlation matrix 
$\underline{\mathcal{C}}^{\mathrm{obs}}\in \mathbb{C}^{{\Nmeas}\times {\Nmeas}}$ 
with entries 
$\underline{\mathcal{C}}^{\mathrm{obs}}_{lm} \approx 
c_{D_p,M_q}(x_l,x_m)$, which simulates the data in Problem 
\ref{problem} and serves 
as input data of our inversion methods,  is then computed by 
\ben
\underline{\mathcal{C}}^{\mathrm{obs}}
:=\frac{1}{\Nsample}\sum\nolimits_{j=1}^{\Nsample}\underline{u}^{(j)}(\overline{\underline{u}^{(j)}})
\enn
where the $\underline{u}^{(j)}$ are viewed as column vectors and 
the $(\overline{\underline{u}^{(j)})}$ as row vectors. The errors 
$\underline{\epsilon}^{(j)}$ are chosen small such that the finiteness 
of $\Nsample$ is the main source of errors. As an additional precaution against inverse crimes, different discretization parameters are chosen 
in the integral equation method for generating synthetic data and for 
reconstructions. 

\subsection{Data fidelity term}\label{weightingmatrix}
The most natural choice of the (symmetric positive definite) weight matrix 
$W\in \mathbb{C}^{\Nmeas^2\times\Nmeas^2}$ in a quadratic data fidelity term
\[
\|W^{-1/2}(\underline{C}(\shape,q)-\underline{C}^{\mathrm{obs}})\|_{\HS}^2
%=\mathrm{tr} (\underline{C}(\shape,q)-\underline{C}^{\mathrm{obs}})^*
%W^{-1}(\underline{C}(\shape,q)-\underline{C}^{\mathrm{obs}})
\]
is $W= \Cov(\underline{C}^{\mathrm{obs}})$ as quadratic approximation 
of the negative log-likelihood functional. Note that 
$W$ involves forth order moments of the primary data 
$\underline{u}^{(j)}\in \mathbb{C}^{\Nmeas}$ and that it is a dense matrix 
with ${\Nmeas}^4$ entries, which may seem computationally infeasible for large 
${\Nmeas}$ at first sight. However, since 
%To mitigate the effects of noise and enhance the accuracy of reconstruction results, we extend several (co)variance-weighted methods, commonly used in experimental aeroacoustics, to address passive inverse scattering problems.
%
%We choose the covariance matrix $\Sigma:={\rm Cov(\mathcal{C}^{\mathrm{obs}})}$ as the weighting matrix based on \cite{Raumer2021}, and the correlation matrix $\mathcal{C}^{\mathrm{obs}}$ 
%
%We have the following covariance expression:
\ben
{\rm Cov}(\underline{\mathcal{C}}^{\mathrm{obs}}_{ml},
\underline{\mathcal{C}}^{\mathrm{obs}}_{m'l'})&=&{\rm Cov}\left(\frac{1}{{\rm \Nsample}}\sum_{j=1}^{{\rm \Nsample}}\underline{u}_m^{(j)}\overline{\underline{u}^{(j)}_l},\frac{1}{{\rm \Nsample}}\sum_{j=1}^{{\rm \Nsample}}\underline{u}_{m'}^{(j)}\overline{\underline{u}^{(j)}_{l'}}\right)\\\
&=&\EV\left(\underline{u}_m^{(1)}\overline{\underline{u}_l^{(1)}}\overline{\underline{u}^{(1)}_{m'}}\underline{u}_{l'}^{(1)}\right)
-\EV\left(\underline{u}_m^{(1)}\overline{\underline{u}^{(1)}_l}\right)\overline{\EV\left(\underline{u}_{m'}^{(1)}\overline{\underline{u}_{l'}^{(1)}}\right)},
\enn
and according to Isserlis' theorem for products of normally distributed random variables (\cite{Isserlis1916}) we have
\ben 
\EV\left(\underline{u}_m^{(1)}\overline{\underline{u}^{(1)}_l }\overline{\underline{u}_{m'}^{(1)}}\underline{u}_{l'}^{(1)}\right)
&=&\EV\left(\underline{u}_m^{(1)}\overline{\underline{u}^{(1)}_l}\right)\EV\left(\overline{u^{(1)}_{m'}} \underline{u}_{l'}^{(1)}\right)+\EV\left(\underline{u}_m^{(1)}\overline{\underline{u}_{m'}^{(1)}}\right)\EV\left(\overline{\underline{u}^{(1)}_l} \underline{u}_{l'}^{(1)}\right)\\
&&+\EV\left(\underline{u}_m^{(1)} \underline{u}_{l'}^{(1)}\right)\EV\left(\overline{\underline{u}^{(1)}_l}\overline{\underline{u}^{(1)}_{m'}}\right).
\enn
By the properties of $\underline{\epsilon}^{(1)}$, we have 
$\EV(\underline{u}_m^{(1)}\overline{\underline{u}_{m'}^{(1)}})
= \EV((\underline{\NopD{D_{p}}} \underline{\pi}^{(1)})_m(\overline{\underline{\NopD{D_{p}}} \underline{\pi}^{(1)}})_{m'} + \beta\delta_{mm'}$. Moreover, 
as $\EV(XY)=0$ for proper complex random variables $X$ and $Y$, 
we have $\EV(\underline{u}_m^{(1)}\underline{u}_{l'}^{(1)})=0$
such that we end up with 
%Therefore, the covariance matrix $\Sigma$ can be expressed as $\Sigma=\mathcal{C}^{\mathrm{obs}}\otimes\mathcal{C}^{\mathrm{obs}}$, and $\otimes$ denotes the Kronecker product \cite{Horn1991}. 
%From the expression for $\Sigma$,  we see that $\Sigma$ is regular, Hermitian, and positive definite if and only if the observed correlation matrix $\mathcal{C}^{\mathrm{obs}}$ possesses these properties. However, $\mathcal{C}^{\mathrm{obs}}$ is not always positive definite and invertible. To overcome the issue, we use a bounded, self-adjoint, and positive definite approximation:
%\ben
%\mathcal{C}^{\mathrm{obs}}_{\mathrm{app}}:=(\mathcal{C}^{\mathrm{obs}}+\beta\mathbf{I}),
%\enn
%where $\beta>0$ is a regularization parameter. Consequently, we approximate $\Sigma$ using:
\begin{align}\label{eq:weight_datafid}
W=(\underline{\mathcal{C}}^{\mathrm{obs}}+\beta\mathbf{I})\otimes(\underline{\mathcal{C}}^{\mathrm{obs}}+\beta\mathbf{I}), 
%\quad \mbox{or}\quad
\end{align}
where $\otimes$ denotes the Kronecker product of matrices.  
%\cite{Horn1991}. 
This implies $W^{\alpha}=(\underline{\mathcal{C}}^{\mathrm{obs}}+\beta\mathbf{I})^{\alpha}\otimes(\underline{\mathcal{C}}^{\mathrm{obs}}+\beta\mathbf{I})^{\alpha}$ for $\alpha\in \{-1/2,-1\}$ and 
$W^{-1}(A) = (\underline{\mathcal{C}}^{\mathrm{obs}}+\beta\mathbf{I})^{-1}A
(\overline{\underline{\mathcal{C}}^{\mathrm{obs}}+\beta\mathbf{I}})^{-1}$,  
which yields 
the use of $W$ computationally feasible even for larger dimensions ${\Nmeas}$.

%\todo[inline]{Are you using the formula above for $W^{-1}(A)$ in your 
%code? Or are you vectorizing $\mathcal{C}^{\mathrm{obs}}$ and setting 
%up the matrix for $W$ as Hans-Georg Raumer did?}

%\todo[inline]{I just check the matlab code, and find $\mathcal{C}^{\mathrm{obs}}$ is vectorized. $W^{-1}$ is calculated by chol, and here I find W here I use the real part.}
%\todo[inline]{What do you mean with ``here I use the real part''?}
%\todo[inline]{use real(W), then use chol to get the inveres}

\subsection{Reconstruction methods}
In this section we review the reconstruction methods used in this 
paper. 
\paragraph*{Source strength reconstruction}

%The conjugate gradient (CG) method is employed to reconstruct the source strength in the passive inverse source problem.

%For this problem, the infinite-dimensional model is given by the minimization problem:
%\ben
%\|\mathcal{C}\near(q)-\mathcal{C}^{\mathrm{ obs}}\|_{\mathrm{HS}}^2=\mathrm{min}!,
%\enn
%where $q\in L^{\infty}(\Omega)$ and $\mathcal{C}^{\mathrm{obs}}\in\mathrm{ran}(\mathcal{C})$ are assumed. According to Theorem \ref{source}, this problem is uniquely solvable when the data $\mathcal{C}^{\mathrm{obs}}$ is exact. However, since $\mathcal{C}\near(q)$ is a Hilbert-Schmidt operator and has a smooth kernel, the inverse problem $\mathcal{C}\near(q)=\mathcal{C}^{\mathrm{obs}}$ is ill-posed. Thus, Tikhonov regularization is needed to obtain stable reconstructions.
For a stable solution of the ill-posed linear equation 
$\mathcal{C}\near(q)-\mathcal{C}^{\rm {obs}}$ we use 
Tikhonov regularization 
incorporating the weight $W$ from \eqref{eq:weight_datafid} 
in the data fidelity term and an $H^1$ penalty term:
\be
\label{6.5}
\widehat{q}_{\alpha}=\displaystyle{{\mathrm{argmin}}_{q\in H^1(\Omega)}}\left[\|W^{-1/2}(\mathcal{C}\near(q)-\mathcal{C}^{\rm {obs}})\|_{\mathrm{HS}}^2+\alpha\|q\|^2_{H^1}\right],
\en
where $\alpha>0$ is a regularization parameter. 
This minimization problem is solved by the conjugate gradient (CG) iteration. 

%According to the theory of linear Tikhonov regularization, the solution to this problem is unique and given by
%\ben
%\widehat{q}_{\alpha}=(\alpha\mathbf{I}+\mathcal{C}\near^*W^{-1}\mathcal{C}\near)(\mathcal{C}\near^*W^{-1}\mathcal{C}^{\mathrm{obs}}).
%\enn

\begin{comment}
\paragraph*{IRGNM for obstacle reconstruction}

The Iteratively Regularized Gauss-Newton Method (IRGNM) is employed to recover the shape and location of a star-shaped obstacle.
%For the passive inverse obstacle problem, we consider the infinite-dimensional model
%\be
%\label{4.11}
%\|\mathcal{C}\near(\shape)-\mathcal{C}^{\mathrm{obs}}\|_{\mathrm{HS}}=\mathrm{min}!,
%\en
%where $\shape$ is the obstacle to be determined and $\mathcal{C}^{\mathrm{obs}}$ is the observed data. Under the assumptions outlined in Theorem \ref{sourceshape}, Problem \eqref{4.11} is uniquely solvable for $\shape\in\shapeman$ given exact data $\mathcal{C}^{\mathrm{obs}}\in\mathrm{ran}(\mathcal{C}\near)$.
%
%Given the nonlinearity and ill-posedness of the operator equation $\mathcal{C}\near(\shape)=\mathcal{C}^{\mathrm{obs}}$, we utilize IRGNM along with a weighting technique to address the minimization problem corresponding to \eqref{4.11}. 
Specifically, in each Newton step, we solve the following regularized quadratic minimization problem
\be
\label{6.3}
\|W^{-1/2}(\mathcal{C}\near'(\shape_n)\partial\shape+\mathcal{C}\near(\shape_n)-\mathcal{C}^{\mathrm{obs}})\|_{\mathrm{HS}}^2
+\alpha_n\|\partial\shape+\shape_n-\shape_0\|_{H^s}^2=\mathrm{min}!
\en
for $\partial\shape$ in order to compute $\shape_{n+1}:=\shape_n+\partial\shape$. 
Here $\alpha_n =\alpha_0 c_{\alpha}^n$ for some 
$c_{\alpha}\in (0,1)$, and the minimization 
problems in each Newton step are solved again by the conjugate gradient method. 

%According to the theory of linear Tikhonov regularization, the solution to the minimization problem \eqref{6.3} is given by
%\ben
%\shape_{n+1}=\shape_n-(\mathcal{C}\near'(\shape_n)^{*}W^{-1}\mathcal{C}\near'(\shape_n)
%+\alpha_n\mathbf{I})^{-1}[\mathcal{C}\near'(\shape_n)^{*}W^{-1}(\mathcal{C}\near(\shape_n)
%-\mathcal{C}^{\mathrm{obs}})+\alpha_n(\shape_n-\shape_0)].
%\enn
\end{comment}

\paragraph*{BERGN for obstacle reconstruction}
To recover non star-shaped obstacles in $\mathbb{R}^2$, we need to work within the space of closed curves on a Riemannian manifold, as discussed in \cite{Mumford2006}. For this purpose, we use the iterative Bending Energy Regularized Gauss-Newton Method (BERGN).
In each Newton-type iteration, we compute an update direction $\partial\shape\in\mathcal{T}_{\shape_n}\mathcal{M}$ in the tangent space $\mathcal{T}_{\shape_n}\mathcal{M}$ 
of shape manifold $\mathcal{M}$ at the point $\shape_n\in\mathcal{M}$ by solving the quadratic minimization problem 
\be
\label{5.7}
&&\|W^{-1/2}(\mathcal{C}\near'(\shape_n)\partial\shape+\mathcal{C}\near(\shape_n)-\mathcal{C}^{\mathrm{obs}})\|_{\mathrm{HS}}^2\\\no
&&+\alpha_n\langle\mathrm{ Hess}\;\varepsilon(\shape_n)\partial\shape,\partial\shape\rangle_{\shape_n}+\alpha_n\langle {\rm grad}\;\varepsilon(\shape_n)\partial\shape,\partial\shape\rangle_{\shape_n}=\mathrm{min}!
\en
Here $\varepsilon(\shape_n)$ describes the bending energy, and 
$\langle\cdot,\cdot\rangle_{\shape_n}$ denotes the Riemannian metric. The we  compute the next iterate 
\be 
\label{update}
\shape_{n+1}:\approx\exp_{\shape_n}(\partial\shape)
\en
using an approximation of the Riemannian exponential map. 
Here $\alpha_n =\alpha_0 c_{\alpha}^n$ for some 
$c_{\alpha}\in (0,1)$, and the minimization 
problems in each Newton step are solved again by the conjugate gradient method. 

%Here, the  According to the assumptions and analysis in \cite{Eckhardt2019, Eckhardt2019(2)}, the minimizer of \eqref{5.7} is given by
%\ben
%\partial\shape=(\alpha_n\mathrm{ Hess}\;\varepsilon(\shape_n)+\mathcal{C}\near'(\shape_n)^{*}W^{-1}\mathcal{C}\near'(\shape_n))^{-1}(\mathcal{C}\near'(\shape_n)^{*}W^{-1}(\mathcal{C}^{\mathrm{obs}}-\mathcal{C}\near(\shape_n)).
%\enn

%\subsection{Newton-CG for Source-Shape}
%The Newton-CG method is implemented to identity a source strength and a star-shaped obstacle simultaneously.

%For passive source-obstacle problem, the infinite dimensional model has the form
%\be
%\label{6.6}
%\|\mathcal{C}\near(\shape,q)-\mathcal{C}^{{\rm obs}}\|_{{\rm HS}}={\rm min!}.
%\en
%Under the assumptions in Theorem \ref{sourceshape}, Problem \eqref{6.6} is uniquely solvable for $p\in C^2$, $q\in L^{\infty}(\Omega)$ and exact data $\mathcal{C}^{{\rm obs}}\in {\rm ran}(\mathcal{C}\near)$.

%In view of the nonlinearity and ill-posedness of the operator equation $\mathcal{C}\near(D_p,M_q)=\mathcal{C}^{{\rm obs}}$, we choose the Newton-CG method to solve the correspondingly linearized equation
%\ben
%\mathcal{C}\near'[D_{p_{n}},M_{q_{n}}][h_n;\tau_n]=\mathcal{C}^{\mathrm{obs}}-\mathcal{C}\near[D_{p_{n}},M_{q_{n}}]
%\enn
%over $[h_n;\tau_n]$ in each Newton step in order to compute $[p_{n+1};q_{n+1}]:=[p_{n};q_{n}]+[h_n;\tau_n]$.

\paragraph*{BERGN for source strength and obstacle reconstruction}

To simultaneously recover the source strength and the shape of a non star-shaped obstacle, we solve the quadratic optimization problem 
\be\no
&&\|W^{-1/2}(\mathcal{C}\near'(\shape_n,q_n)(\partial\shape,\partial q)+\mathcal{C}\near(\shape_n,q_n)-\mathcal{C}^{\mathrm{obs}})\|_{\mathrm{HS}}^2
+\alpha_n\langle \mathrm{Hess}\;\varepsilon(\shape_n)\partial\shape,\partial\shape\rangle_{\shape_n}\\\label{5.9}
&&+\alpha_n\langle \mathrm{grad}\;\varepsilon(\shape_n)\partial\shape,\partial\shape\rangle_{\shape_n}+\alpha_n\|\partial q\|_{H^1}^2=\mathrm{min}!
\en
in each Newton step and use formula \eqref{update} to get $\shape_{n+1}$ and $q_{n+1} := q_n + \partial q$ to update the source 
strength.

%The minimizer $(\partial\shape, \tau_n)$  of the problem \eqref{5.9} is given by
%\ben
%(\partial\shape;\tau_n)&=&\left(\alpha_n\left(
%\begin{array}{cc}
%\mathrm{Hess}\;\varepsilon(\shape_n)  & 0 \\ 0  &\mathbf{I}\\
%\end{array}\right)
%+\mathcal{C}\near'(\shape_n,q_n)^{*}W^{-1}\mathcal{C}\near'(\shape_n,q_n)\right)^{-1}
%\\
%&&\times\left(\mathcal{C}\near'(\shape_n,q_n)^{*}W^{-1}(\mathcal{C}^{\mathrm{obs}}-\mathcal{C}\near(\shape_n,q_n))\right).
%\enn

\paragraph*{Newton-CG method} 
Following \cite{hanke:97b} we solve the linearized least squares problem 
\[
\|W^{-1/2}(\mathcal{C}\near'(\shape_n)(\partial\shape)+\mathcal{C}\near(\shape_n)-\mathcal{C}^{\mathrm{obs}})\|_{\mathrm{HS}}^2 = \min! 
\]
for the update $\partial\shape = \shape_{n+1}-\shape_n$ 
by the CG method and use the regularizing properties of CG with 
early stopping by terminating the inner iteration 
at the first iterate $\partial\shape_k$ for which 
\[
\|W^{-1/2}(\mathcal{C}\near'(\shape_n)\partial\shape_k+\mathcal{C}\near(\shape_n)-\mathcal{C}^{\mathrm{obs}})\|_{\mathrm{HS}} \leq 0.8 \|W^{-1/2}(\mathcal{C}\near(\shape_n)-\mathcal{C}^{\mathrm{obs}})\|.
\]

\subsection{Reconstruction results}

In this section, we present several numerical examples to demonstrate the effectiveness of the proposed methods for both two-dimensional and three-dimensional cases of Problem \eqref{ui}-\eqref{us}, employing Dirichlet boundary conditions. For the numerical simulations in $\mathbb{R}^d$, we utilize near-field correlation data, selecting the measurement domain $M$ to be either a circle or a sphere with varying radius $R$ centered at $(0,0)$. Meanwhile, we set $\beta=0.01$. These examples include the reconstruction of both star-shaped and non-star-shaped obstacles, along with the inversion of source strength.

% \todo[inline]{Please also specify $\beta$ in the examples! \\
% I assume you did choose the stopping indices and regularization parameters by trial and error, did you? Otherwise, the following paragraph should be modified.}

% \todo[inline]{I will add $\beta$ later. Yes.}
In the following examples the stopping index for the regularized 
Newton methods and the regularization parameter in Tikhonov regularization were chosen by trial and error. The development 
of theory-based, fully data driven stopping and parameter choice rules for inverse problem with 
correlation data with $\Nsample$ and $\beta$ parameterizing 
the noise level(s) is beyond the scope of this paper and will be addressed in future research. 
If finiteness of $\Nsample$ is the dominating source of 
error, the noise level is proportional to $\Nsample^{-1/2}$, and $\alpha \sim \Nsample^{-1/2}$ is 
a plausible a-priori parameter choice rule in 
Tikhonov regularization. Similarly, $\alpha_{N_{\mathrm{stop}}} \sim \Nsample^{-1/2}$ would 
be a canonical asymptotic a-priori choice of the stopping 
index $N_{\mathrm{stop}}$ in regularized Gau{\ss}-Newton methods, but both involve free constants hidden in the 
symbol $\sim$. 

%\todo[inline]{Give the value(s) of $R$! How did you choose $\Omega$?}
%\todo[inline]{measure surface M, $R=1.8$; surface source region $\Omega$, $R=1.5$}
%\todo[inline]{So you choose $\Omega=\{x\in \mathbb{R}^d:|x|=1.5\}$? 
%This is not covered by our current theory. I faintly remember discussing 
%different geometrical setups with you. Are there any problems in extending 
%our uniqueness theorem to such a setting? If not, I guess we should do it!}

%\todo[inline]{I will think about this special case, at present the numerical parts use a surface source.}
%\todo[inline]{I had a look. The problem seems to be to get to \eqref{3.5}. 
%If $\Omega$ is a surface surrounding $D_1$, $D_2$, we can no longer infer 
%\eqref{3.5} by analytic continuation. We would need another argument. 
%Also the uniqueness proof for the sources would have to be changed. 
%I do think that this discrepancy between theory and numerical experiments 
%is a problem, and we can't leave it like this for submission! 
%Of course, the other way to fix this problem is to redo the 
%numerical experiments for an open set $\Omega$.}

%\todo[inline]{yes, I agree, I am thinking to redo numerical parts. Whether this time we need to use other methods to get the Green function}

\begin{example} This example evaluates Tikhonov regularization with CG minimization for reconstructing the source strength function using correlation data. 
%\todo[inline]{Note that I changed the original text where CG was 
%called the regularization method --- also in section 5.4. But (I assume?!) that regularization is achieved by a penalty term as written. 
%The CG method can be a regularization method in itself if early stopping by the discrepancy principle is used for regularization (without penalty term). But this is something else, and it works only for deterministic noise models!}
Figure \ref{f1} shows the method's effectiveness in solving the passive inverse source problem.
\end{example}

\begin{figure}[htbp]
\begin{minipage}[t]{0.3\linewidth}
\centering
% This file was created by matlab2tikz.
%
%The latest updates can be retrieved from
%  http://www.mathworks.com/matlabcentral/fileexchange/22022-matlab2tikz-matlab2tikz
%where you can also make suggestions and rate matlab2tikz.
%
\begin{tikzpicture}

\begin{axis}[%
width=1.5in,
height=1.5in,
at={(0.867in,0.478in)},
scale only axis,
point meta min=1.84369026740875,
point meta max=36.8592528326288,
axis on top,
separate axis lines,
every outer x axis line/.append style={black},
every x tick label/.append style={font=\color{black}},
every x tick/.append style={black},
xmin=-3,
xmax=3,
xtick={-3,-1.5,0,1.5,3},
tick align=outside,
every outer y axis line/.append style={black},
every y tick label/.append style={font=\color{black}},
every y tick/.append style={black},
ymin=-3,
ymax=3,
ytick={-3,-1.5,0,1.5,3},
axis background/.style={fill=white},
title style={font=\normalfont},
title={True source}
]
\addplot [forget plot] graphics [xmin=-2.55, xmax=-1.95, ymin=-1.56521739130435, ymax=1.56521739130435] {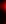};
\addplot [forget plot] graphics [xmin=1.95, xmax=2.55, ymin=-1.56521739130435, ymax=1.56521739130435] {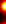};
\addplot [color=orange, line width=1.5pt, forget plot]
  table[row sep=crcr]{%
1.08571428571429	0\\
1.09136835510355	0.107490400873748\\
1.08554740833152	0.215928804894075\\
1.06785865379288	0.323931381189473\\
1.03809777026735	0.429994175514005\\
0.996277926166649	0.532521646038035\\
0.942653886426241	0.629861189861619\\
0.877740113431713	0.72034269495295\\
0.80232197656312	0.80232197656312\\
0.717459420328546	0.874226833136639\\
0.624482689979699	0.934604392508463\\
0.524979958622756	0.982168421409814\\
0.420776919094217	1.01584534481082\\
0.313908568644468	1.034817868821\\
0.206583486598323	1.0385653205968\\
0.101140832328559	1.02690010377643\\
6.12323399573677e-17	1\\
-0.094397744706266	0.958436386208721\\
-0.179657392099061	0.903198702351946\\
-0.25351153602328	0.835715535138107\\
-0.313920934389218	0.757872177315284\\
-0.359203737915778	0.672022926663303\\
-0.388206950208214	0.580992758788408\\
-0.400537108174766	0.488055878439765\\
-0.396866176580776	0.396866176580776\\
-0.379314824191471	0.311295631260842\\
-0.35187319190977	0.235114150090586\\
-0.320728646175497	0.171433032999225\\
-0.294223939476191	0.121871546105879\\
-0.282025273840945	0.0855514315131033\\
-0.293128695042437	0.0583069226778011\\
-0.3328566511829	0.03278351869177\\
-0.4	4.89858719658941e-17\\
-0.485965593990855	-0.0478634333354572\\
-0.577027792407777	-0.114777964231689\\
-0.658686595878966	-0.199810394396494\\
-0.719689266619565	-0.29810505492817\\
-0.753912002510616	-0.402974360868885\\
-0.760046552934892	-0.507846870495294\\
-0.740134059603637	-0.60741232508662\\
-0.697974150476163	-0.697974150476163\\
-0.637926507864967	-0.777315698899487\\
-0.564191974252096	-0.844372959915306\\
-0.480466195378081	-0.89888902786548\\
-0.389826354403461	-0.941124071771295\\
-0.294742856625474	-0.971636983534636\\
-0.197148170033835	-0.991130781011441\\
-0.0985256369448776	-1.00034757944903\\
-1.83697019872103e-16	-1\\
0.0975782861743081	-0.990728965658613\\
0.193557807705512	-0.973080810687221\\
0.287420642751785	-0.947498879281725\\
0.378726350905203	-0.914326292783411\\
0.467065175909602	-0.873817483856129\\
0.552020198856643	-0.826156610716602\\
0.633138276722282	-0.771481222370796\\
0.709909654841646	-0.709909654841646\\
0.781756127146142	-0.641570673149193\\
0.848027536160656	-0.566633884029886\\
0.908006268661578	-0.48533946211274\\
0.960919229442548	-0.398025777180208\\
1.00595658243905	-0.305153593135863\\
1.04229635222288	-0.207325634931836\\
1.06913379460469	-0.105300487807186\\
1.08571428571429	0\\
};
\end{axis}
\end{tikzpicture}%
\end{minipage}\qquad\qquad
\begin{minipage}[t]{0.3\linewidth}
\centering
% This file was created by matlab2tikz.
%
%The latest updates can be retrieved from
%  http://www.mathworks.com/matlabcentral/fileexchange/22022-matlab2tikz-matlab2tikz
%where you can also make suggestions and rate matlab2tikz.
%
\begin{tikzpicture}

\begin{axis}[%
width=1.5in,
height=1.5in,
at={(0.867in,0.478in)},
scale only axis,
point meta min=0.42638151211505,
point meta max=33.5427318802976,
axis on top,
separate axis lines,
every outer x axis line/.append style={black},
every x tick label/.append style={font=\color{black}},
every x tick/.append style={black},
xmin=-3,
xmax=3,
xtick={-3,-1.5,0,1.5,3},
tick align=outside,
every outer y axis line/.append style={black},
every y tick label/.append style={font=\color{black}},
every y tick/.append style={black},
ymin=-3,
ymax=3,
ytick={-3,-1.5,0,1.5,3},
axis background/.style={fill=white},
title style={font=\normalfont},
title={Reconstructed source},
colormap/hot2,
colorbar,
colorbar style={separate axis lines, every outer x axis line/.append style={black}, every x tick label/.append style={font=\color{black}}, every x tick/.append style={black}, every outer y axis line/.append style={black}, every y tick label/.append style={font=\color{black}}, every y tick/.append style={black}}
]
\addplot [forget plot] graphics [xmin=-2.55, xmax=-1.95, ymin=-1.56521739130435, ymax=1.56521739130435] {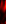};
\addplot [forget plot] graphics [xmin=1.95, xmax=2.55, ymin=-1.56521739130435, ymax=1.56521739130435] {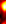};
\addplot [color=orange, line width=1.5pt, forget plot]
  table[row sep=crcr]{%
1.08571428571429	0\\
1.09136835510355	0.107490400873748\\
1.08554740833152	0.215928804894075\\
1.06785865379288	0.323931381189473\\
1.03809777026735	0.429994175514005\\
0.996277926166649	0.532521646038035\\
0.942653886426241	0.629861189861619\\
0.877740113431713	0.72034269495295\\
0.80232197656312	0.80232197656312\\
0.717459420328546	0.874226833136639\\
0.624482689979699	0.934604392508463\\
0.524979958622756	0.982168421409814\\
0.420776919094217	1.01584534481082\\
0.313908568644468	1.034817868821\\
0.206583486598323	1.0385653205968\\
0.101140832328559	1.02690010377643\\
6.12323399573677e-17	1\\
-0.094397744706266	0.958436386208721\\
-0.179657392099061	0.903198702351946\\
-0.25351153602328	0.835715535138107\\
-0.313920934389218	0.757872177315284\\
-0.359203737915778	0.672022926663303\\
-0.388206950208214	0.580992758788408\\
-0.400537108174766	0.488055878439765\\
-0.396866176580776	0.396866176580776\\
-0.379314824191471	0.311295631260842\\
-0.35187319190977	0.235114150090586\\
-0.320728646175497	0.171433032999225\\
-0.294223939476191	0.121871546105879\\
-0.282025273840945	0.0855514315131033\\
-0.293128695042437	0.0583069226778011\\
-0.3328566511829	0.03278351869177\\
-0.4	4.89858719658941e-17\\
-0.485965593990855	-0.0478634333354572\\
-0.577027792407777	-0.114777964231689\\
-0.658686595878966	-0.199810394396494\\
-0.719689266619565	-0.29810505492817\\
-0.753912002510616	-0.402974360868885\\
-0.760046552934892	-0.507846870495294\\
-0.740134059603637	-0.60741232508662\\
-0.697974150476163	-0.697974150476163\\
-0.637926507864967	-0.777315698899487\\
-0.564191974252096	-0.844372959915306\\
-0.480466195378081	-0.89888902786548\\
-0.389826354403461	-0.941124071771295\\
-0.294742856625474	-0.971636983534636\\
-0.197148170033835	-0.991130781011441\\
-0.0985256369448776	-1.00034757944903\\
-1.83697019872103e-16	-1\\
0.0975782861743081	-0.990728965658613\\
0.193557807705512	-0.973080810687221\\
0.287420642751785	-0.947498879281725\\
0.378726350905203	-0.914326292783411\\
0.467065175909602	-0.873817483856129\\
0.552020198856643	-0.826156610716602\\
0.633138276722282	-0.771481222370796\\
0.709909654841646	-0.709909654841646\\
0.781756127146142	-0.641570673149193\\
0.848027536160656	-0.566633884029886\\
0.908006268661578	-0.48533946211274\\
0.960919229442548	-0.398025777180208\\
1.00595658243905	-0.305153593135863\\
1.04229635222288	-0.207325634931836\\
1.06913379460469	-0.105300487807186\\
1.08571428571429	0\\
};
\end{axis}
\end{tikzpicture}%
\end{minipage}
\qquad\qquad
\caption{Reconstructions of source strength with $\kappa=\pi$, $\Nsample=10000$, ${\Nsrc}=288$, ${\Nmeas}=32$, and $R=5$ from $\mathcal{C}^{\mathrm{obs}}$.}\label{f1}
\end{figure}

\begin{example}
This example uses correlation data to recover a non-star-shaped obstacle in $\mathbb{R}^2$, as shown in Figure \ref{f2}, which highlights the effectiveness of the BERGN method.
\end{example}

\begin{figure}[htbp]
\begin{minipage}[t]{0.3\linewidth}
\centering
\input{Dir-obstacle/obstacle}
\end{minipage}\qquad\qquad
\begin{minipage}[t]{0.3\linewidth}
\centering
\input{Dir-obstacle/obstacle3}
\end{minipage}\qquad\qquad
\caption{Reconstructions of non-star-shaped obstacle with $\kappa=2.5\pi/2$, $\Nsample=10000$, ${\Nsrc}=288$, ${\Nmeas}=32$, and $R=5$ from $\mathcal{C}^{\mathrm{obs}}$.}\label{f2} 
\end{figure}

\begin{example}
In this example, correlation data is used to simultaneously recover the source strength and a non-star-shaped obstacle in $\mathbb{R}^2$. Figure \ref{f3} demonstrates the BERGN method's accuracy and effectiveness.
\end{example}

\begin{figure}[htbp]
\begin{minipage}[t]{0.3\linewidth}
\centering
% This file was created by matlab2tikz.
%
%The latest updates can be retrieved from
%  http://www.mathworks.com/matlabcentral/fileexchange/22022-matlab2tikz-matlab2tikz
%where you can also make suggestions and rate matlab2tikz.
%
\begin{tikzpicture}

\begin{axis}[%
width=1.5in,
height=1.5in,
at={(0.867in,0.478in)},
scale only axis,
point meta min=0.678255745267558,
point meta max=13.5597613340644,
axis on top,
separate axis lines,
every outer x axis line/.append style={black},
every x tick label/.append style={font=\color{black}},
every x tick/.append style={black},
xmin=-3,
xmax=3,
xtick={-3,-1.5,0,1.5,3},
tick align=outside,
every outer y axis line/.append style={black},
every y tick label/.append style={font=\color{black}},
every y tick/.append style={black},
ymin=-3,
ymax=4,
ytick={-3,-1.5,0,1.5,3},
axis background/.style={fill=white},
title style={font=\normalfont},
title={True source and obstacle}
]
\addplot [forget plot] graphics [xmin=-2.05, xmax=-1.45, ymin=-1.56521739130435, ymax=1.56521739130435] {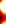};
\addplot [forget plot] graphics [xmin=1.45, xmax=2.05, ymin=-1.56521739130435, ymax=1.56521739130435] {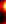};
\addplot [color=orange, line width=1.5pt, forget plot]
  table[row sep=crcr]{%
-0.149941639973572	0.501048116788004\\
-0.133843792338541	0.385289063835608\\
-0.0782051586544083	0.285974456291364\\
0.00722405612213784	0.194905919934387\\
0.10925366094537	0.110155104148614\\
0.218262542944971	0.0253720236725542\\
0.321080681214414	-0.0622345919890673\\
0.408338203540294	-0.158482280925378\\
0.466486393236096	-0.266325371693579\\
0.505157952343543	-0.385070062146835\\
0.538901652772338	-0.509333132855938\\
0.564298688438677	-0.638037232380894\\
0.578823283965763	-0.766742534451507\\
0.579268435191686	-0.894061628627929\\
0.562682703126582	-1.01569820861145\\
0.526309907313376	-1.13001183469485\\
0.465253860278564	-1.23494145972515\\
0.362134329376925	-1.34942174423232\\
0.225447661988721	-1.4617038746279\\
0.0694442188187118	-1.56137837656411\\
-0.0966477333142413	-1.63925592924197\\
-0.260348114558843	-1.68796404184409\\
-0.412263147945908	-1.69707752697889\\
-0.53968102831698	-1.66037054665443\\
-0.632718016899188	-1.56753549331994\\
-0.652966052607406	-1.4613858193447\\
-0.591792094332252	-1.37322447216791\\
-0.475362387336835	-1.2932747254772\\
-0.325433902911742	-1.21796047021955\\
-0.166427950832406	-1.13985271764912\\
-0.0212772984721446	-1.0538685255848\\
0.0865926193416984	-0.953641729425033\\
0.133275065456301	-0.83437878345467\\
0.117177063721951	-0.718619730502275\\
0.0615384745660109	-0.619305122958031\\
-0.023890767241255	-0.528236586601054\\
-0.125920352259196	-0.44348577081528\\
-0.234929250225721	-0.358702690339221\\
-0.337747374773873	-0.271096074677599\\
-0.42500490952233	-0.174848385741289\\
-0.483153087389525	-0.0670052949730876\\
-0.521824658397032	0.0517393954801681\\
-0.555568346176246	0.176002466189271\\
-0.580965393134846	0.304706565714227\\
-0.59548969932216	0.43341186778484\\
-0.595934789755774	0.560730961961262\\
-0.579349076083284	0.682367541944783\\
-0.542976269726607	0.796681168028183\\
-0.48192023028051	0.901610793058486\\
-0.378800693156139	1.01609107756565\\
-0.242114031338956	1.12837320796124\\
-0.0861105828573844	1.22804770989744\\
0.0799813639709613	1.3059252625753\\
0.243681750702025	1.35463337517742\\
0.3955967782568	1.36374686031223\\
0.523014664970328	1.32703987998776\\
0.616051646513042	1.23420482665328\\
0.636299690193864	1.12805515267803\\
0.575125722686913	1.03989380550124\\
0.458696026672357	0.959944058810537\\
0.308767528710407	0.884629803552878\\
0.149761594220362	0.806522050982455\\
0.00461091690170424	0.720537858918135\\
-0.103258958367591	0.620311062758366\\
-0.149941639973572	0.501048116788004\\
};
\end{axis}
\end{tikzpicture}%
\end{minipage}\qquad\qquad
\begin{minipage}[t]{0.3\linewidth}
\centering
\input{Dir-source-obstacle/source_obstacle5}
\end{minipage}\qquad\qquad
\caption{Reconstructions of non star-shaped obstacle and source strength with $\kappa=2.5\pi/2$, $\Nsample=10000$, ${\Nsrc}=128$, ${\Nmeas}=32$, and $R=4$ from $\mathcal{C}^{\mathrm{obs}}$.}\label{f3} 
\end{figure}

\begin{example}
In this final 3D example, the shape and location of a star-shaped obstacle in $\mathbb{R}^3$ are reconstructed using correlation data, with the source supported in the regions $[-1.5,-1]\times[-0.5, 0.5]\times[-0.5,0.5]$ and $[1, 1.5]\times[-0.5,0.5]\times[-0.5,0.5]$. Figure \ref{f4} demonstrates the Newton-CG method's effectiveness in solving this inverse problem.
\end{example}

\begin{figure}[htbp]
\begin{minipage}[t]{0.3\linewidth}
\centering
\includegraphics[width=2.5in]{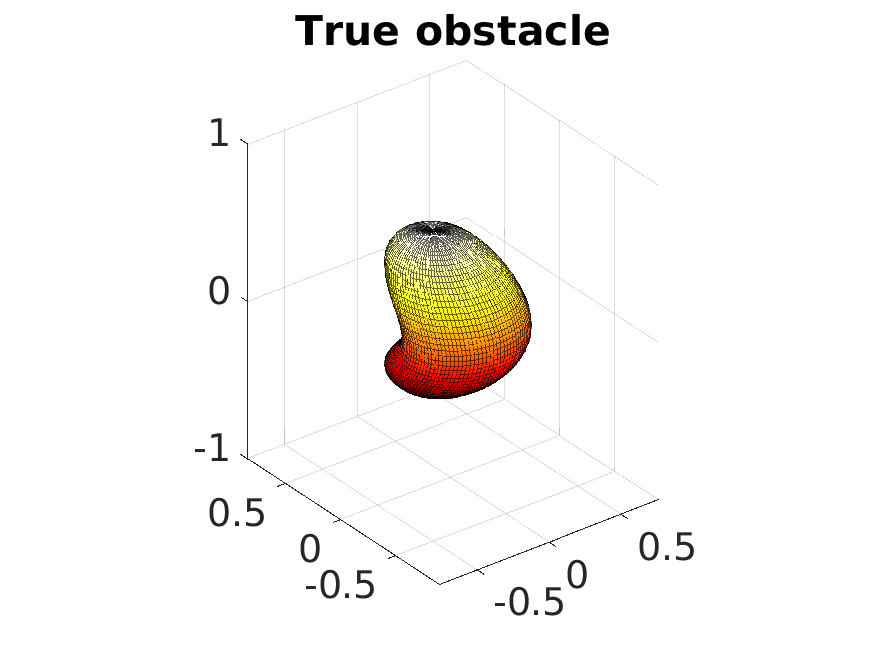}
\end{minipage}\qquad\qquad
\begin{minipage}[t]{0.3\linewidth}
\centering
\includegraphics[width=3.8in]{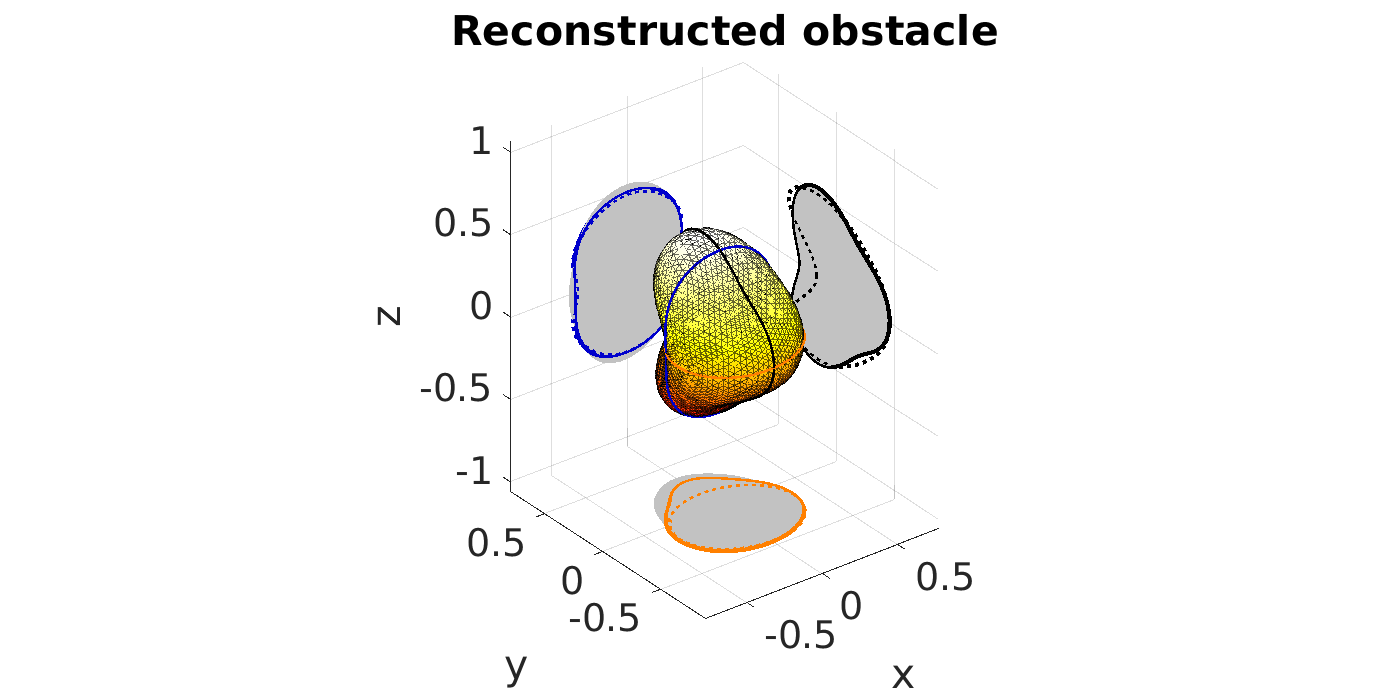}
\end{minipage}
\qquad\qquad
\caption{Reconstruction of a star-shaped obstacle for $\kappa=\pi$, $\Nsample=10000$, 
${\Nsrc}=64$ and ${\Nmeas}=42$, and $R=4$ from $\mathcal{C}^{\mathrm{obs}}$.
}\label{f4}
\end{figure}

\section{Conclusions}

In this paper, we considered passive obstacle scattering problems for the Helmholtz equation in $\mathbb{R}^{d}\;(d=2,3)$ with Dirichlet boundary
conditions. We showed that both, the strength of an unknown source and shape of the scattering obstacle are uniquely determined by the covariance of total fields. In numerical examples with synthetic data demonstrate that measurements of such randomly excited waves allow for reconstructions 
of similar quality as in classical active inverse obstacle scattering.

%We utilize different efficient reconstruction methods such as CG, Newton-CG and BERGN to recover a source strength function, an obstacle and both of them simultaneously. For these problems, the Fr\'{e}chet derivatives of the forward operators and their adjoints are characterized.

%Our main contributions include proving the injectivity of the forward operator that maps unknowns to covariance operators under specific conditions. We demonstrate that, for the unknown source strength, the source must be spatially uncorrelated, whereas for the unknown obstacle, spatial correlation of the source is allowed. Future work will address more complex scenarios, such as passive obstacle problems with Neumann and Robin boundary conditions, as well as passive transmission and medium scattering problems.
Open questions include uniqueness proofs for different boundary conditions, 
e.g., of Neumann, Robin or transmission type. Such settings seem to require 
new ideas. The same holds true for some different geometrical setups. 
In particular, one may ask if the factorization method is applicable, 
either for proving uniqueness or for numerical reconstructions. 
This question has been answered positively concerning the identification 
of shapes of sources in \cite{GR:22}, but to the best of our knowledge the identifiability of shapes of scattering obstacles, which the factorization method was originally designed for, 
remains unclear in the context of passive imaging. 

\appendix
\renewcommand{\theequation}{\Alph{section}.\arabic{equation}}
\section{Hilbert Schmidt, trace class, and covariance operators}
In this section we collect some known facts on Hilbert Schmidt, trace class, and covariance operators that are used throughout the paper. 

\subsection{Hilbert-Schmidt and trace class operators}
\begin{thmdefi}
Let $X$ and $Y$ be Hilbert spaces. 
A compact linear operator $\mathcal{K}\in L(X,Y)$ 
is called a Hilbert-Schmidt operator if its singular values $\sigma_j(\mathcal{K})$ (counted with multiplicity) are square summable. 
The set $\HS(X,Y)$ of all Hilbert-Schmidt operators from $X$ to $Y$ equipped with the norm $\|\mathcal{K}\|_{\HS}:=(\sum_j \sigma_j(\mathcal{K})^2)^{1/2}$ is a Hilbert space (see \cite[Def. (3.8.1) and Thm. 3.8.3]{Simon2015}).

A compact linear operator $\mathcal{K}\in L(X,Y)$ 
is called a Hilbert-Schmidt trace class if its singular values $\sigma_j(\mathcal{K})$ (counted with multiplicity) are summable. 
The set $S_1(X,Y)$ of all trace class operators from $X$ to $Y$, equipped with the norm $\|\mathcal{K}\|_{S_1}:=\sum_j \sigma_j(\mathcal{K})$ 
is a Banach space (see \cite[Thm. VI.20]{Reed1980}). 
%\todo{citation needed}
\end{thmdefi}

Obviously, $\mathcal{K}\in \HS(X,Y)$ if and only if $\mathcal{K}^*\in \HS(Y,X)$ since the 
singular values of both operators coincide. 
If $X=L^2(D_1)$ and $Y=L^2(D_2)$ for some measurable 
spaces $D_1$, $D_2$ and $\mathcal{K}\in L(X,Y)$, 
then $\mathcal{K}\in \HS(X,Y)$ if and only if there 
exists a kernel $K\in L^2(D_2\times D_1)$ such that 
$(\mathcal{K}v)(x) = \int_{D_2}K(x,y) v(y)\dy$ for all $v\in L^2(D_2)$ and almost all $x\in D_1$. 
%\todo{citation needed}
In this case (see \cite[Thm. VI.23]{Reed1980}), 
\be\label{eq:HSL2}
\|\mathcal{K}\|_{\HS} = \|K\|_{L^2}. 
\en

\begin{prop}[trace](\cite[Thm. VI.18]{Reed1980})
%\todo{citation needed}
Let $\mathcal{K}\in S_1(X,X)$, and let $\{e_j:j\in\mathbb{N}\}$ be a complete 
orthonormal system in $X$. Then 
$ 
\operatorname{tr}(\mathcal{K}):=\sum\nolimits_{j}\langle e_j,\mathcal{K}e_j\rangle
$
is finite and independent of the choice of $\{e_j\}$. It is called 
the \emph{trace} of $\mathcal{K}$.
\end{prop}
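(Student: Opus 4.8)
The plan is to reduce the statement to the Schmidt (singular value) decomposition of $\mathcal{K}$ and then collapse the resulting double sum by Parseval's identity, the point being that the final expression no longer refers to the chosen orthonormal system. First I would recall that any compact operator $\mathcal{K}\in L(X)$ admits a Schmidt representation
\[
\mathcal{K}x = \sum_{n} \sigma_n \langle \phi_n, x\rangle \psi_n, \qquad x\in X,
\]
where $(\sigma_n)_n$ are the singular values and $\{\phi_n\}$, $\{\psi_n\}$ are orthonormal systems in $X$; the hypothesis $\mathcal{K}\in S_1(X,X)$ means precisely that $\sum_n \sigma_n = \|\mathcal{K}\|_{S_1}<\infty$. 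Substituting this into the defining sum and using linearity gives, for an arbitrary complete orthonormal system $\{e_j\}$,
\[
\sum_j \langle e_j, \mathcal{K}e_j\rangle = \sum_j \sum_n \sigma_n \langle \phi_n, e_j\rangle\langle e_j, \psi_n\rangle.
\]

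Next I would establish absolute convergence of this double series, which is the one genuinely quantitative step and the main obstacle, since everything afterwards is formal. By the Cauchy--Schwarz inequality in $\ell^2$ together with Bessel's inequality one bounds, for each fixed $n$,
\[
\sum_j |\langle \phi_n, e_j\rangle|\,|\langle e_j, \psi_n\rangle| \le \Big(\sum_j|\langle \phi_n, e_j\rangle|^2\Big)^{1/2}\Big(\sum_j|\langle e_j, \psi_n\rangle|^2\Big)^{1/2} \le \|\phi_n\|\,\|\psi_n\| = 1,
\]
so that the full double sum is dominated by $\sum_n \sigma_n<\infty$. In particular the series defining $\operatorname{tr}(\mathcal{K})$ converges absolutely, which proves finiteness and legitimises an arbitrary reordering and interchange of the two summations.

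Finally, interchanging the order of summation and applying Parseval's identity $\sum_j \langle \phi_n, e_j\rangle\langle e_j, \psi_n\rangle = \langle \phi_n, \psi_n\rangle$ (valid because $\{e_j\}$ is complete) yields
\[
\sum_j \langle e_j, \mathcal{K}e_j\rangle = \sum_n \sigma_n \langle \phi_n, \psi_n\rangle.
\]
The right-hand side is determined by the Schmidt representation of $\mathcal{K}$ alone and contains no reference to the orthonormal system $\{e_j\}$. Since the identity holds for every complete orthonormal system with one and the same fixed right-hand side, the value $\operatorname{tr}(\mathcal{K})$ is independent of the choice of $\{e_j\}$, completing the proof. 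Note that uniqueness of the Schmidt decomposition is not needed: it suffices to fix one such representation and observe that every basis sum equals the corresponding $\sum_n \sigma_n\langle \phi_n,\psi_n\rangle$.
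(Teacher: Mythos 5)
Your proof is correct. The paper gives no proof of this proposition at all --- it is quoted verbatim from the cited reference \cite[Thm.~VI.18]{Reed1980} --- so there is nothing internal to compare against; your argument via the Schmidt representation, the Cauchy--Schwarz/Bessel bound $\sum_j |\langle \phi_n,e_j\rangle|\,|\langle e_j,\psi_n\rangle|\le 1$ giving domination by $\sum_n\sigma_n$, and Parseval to collapse the inner sum to $\langle\phi_n,\psi_n\rangle$ is the standard one and is essentially equivalent to the polar-decomposition argument in the cited source (the Schmidt decomposition being the spectral decomposition of $|\mathcal{K}|$ composed with the partial isometry).
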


It can be shown 
%\todo{citation needed}
that for Hilbert spaces $X,Y$ and $Z$ the implication
\be\label{eq:productHS}
\mathcal{A}\in \HS(X,Y), \mathcal{B}\in \HS(Y,Z) \quad \Rightarrow 
\mathcal{BA}\in S_1(X,Z)
\en
holds true (see \cite[Thm. VI.22 (h)]{Reed1980}). Moreover, 
\be\label{eq:trace_inner_product}
\langle \mathcal{K}_1,\mathcal{K}_2\rangle_{\HS} := \operatorname{tr}(\mathcal{\mathcal{K}}_1^*\mathcal{K}_2)
\en
defines an inner product on $\HS(X,Y)$ which generates the 
norm in $\HS(X,Y)$ (see \cite[Thm. VI.22 (c)]{Reed1980}).
%\todo{citation needed}

\subsection{Hilbert space processes}
\begin{defi}\label{defi:HS_process}
Let $X$ be a separable Hilbert space over $\mathbb{\mathbb{K}}\in\{\mathbb{R},\mathbb{C}\}$ and let $(\Omega_P,\Sigma,P)$ be a probability space. 
A mapping $Z: X\times \Omega_P\to \mathbb{K}$, 
$(v,\omega)\mapsto \langle Z,v\rangle(\omega)$ is called a 
\emph{Hilbert space process} on $X$ if it is linear in $v$ and 
if $\langle Z,v\rangle=\langle Z,v\rangle(\omega)$ is a $\mathbb{K}$-valued random variable 
for all $v\in X$. We say that $Z$ has finite second moments if 
there exists a constant $C>0$ such that 
$\EV|\langle Z,v\rangle|^2\leq C\|v\|_X^2$
for all $v\in X$.
We say that $Z$ is Gaussian if the random vector 
$(\langle Z,v_1\rangle,\dots, \langle Z,v_n\rangle)^{\top}$
is multivariate Gaussian for all finite subsets 
$\{v_1,\dots,v_n\}\subset X$. Another Hilbert space process 
$\tilde{Z}:X\times \Omega_P\to \mathbb{K}$ is called 
a \emph{strict version} of $Z$ if $\mathbb{P}[\langle Z,v\rangle = \langle \tilde{Z},v\rangle]=1$ for all 
$v\in X$.
\end{defi}

For a Hilbert space process $Z$ on $X$ with finite second moments it is straightforward to show that 
there exist an expectation $\EV(Z)\in X$ and 
a covariance operator $\Cov(Z)\in L(X)$  which 
are uniquely characterized by 
\begin{align*}
\EV\langle Z,v_1\rangle = \langle \EV(Z),v_1\rangle,\qquad 
\Cov(\langle Z,v_1\rangle,\langle Z,v_2\rangle)
=\langle\Cov(Z)v_1,v_2\rangle_X
\end{align*}
for all $v_1,v_2\in X$.
Moreover, $\Cov(Z)$ is Hermitian, positive semidefinite and 
$\|\Cov(Z)\|\leq C$. $Z$ is called \emph{centered} 
if $\EV(Z)=0$. A centered Hilbert space process 
$Z$ is called a \emph{white noise process} if 
$\Cov(Z)=I$. Given an arbitrary 
orthonormal basis $\{e_1,e_2,\dots\}$ of $X$, a Gaussian white noise process $W$ on $X$ can be constructed 
by $\langle W,v\rangle:=\sum_j \pi_j\langle e_j,v\rangle$
with independent random variables $\pi_j\sim \mathcal{N}(0,1)$. 

Note that a Hilbert space valued random variable 
$z$ with finite second moments, i.e.\ a measurable mapping from $z:\Omega_P\to X$ 
such that $\EV \|z\|_X^2<\infty$ can 
be identified with a Hilbert space process 
$\langle Z,v\rangle(\omega) := \langle z(\omega),v\rangle$. We set 
$\EV(z):=\EV(Z)$ and $\Cov(z):=\Cov(Z)$.
If $\EV(z)=0$, we have 
\begin{align}\label{eq:EV_normsq}
\EV \|z\|^2 = \sum\nolimits_j|\EV\langle z,e_j\rangle|^2
= \sum\nolimits_j\langle \Cov(z)e_j,e_j\rangle 
= \operatorname{tr} \Cov(z).
\end{align}
Not every Hilbert space process can be identified with a 
Hilbert space valued random variable with finite second moments as the example of white noise and eq.~\eqref{eq:EV_normsq} show (as $\operatorname{tr} I =\dim X=\infty)$. However, the following proposition, which, e.g., can be inferred from 
\cite[Prop.~2.1.12]{GN:15}, gives a partial converse:
\begin{prop}\label{prop:regularity_white_noise}
Let $X$ and $V$ be separable Hilbert spaces such that 
$V\subset X\subset V'$ forms a Gelfand triple. Let $Z$ be a centered Gaussian Hilbert space process on $X$ such that 
for a dense, countable subset $B\subset \{v\in V:\|v\|_V\leq 1\}$ we have 
\begin{align}\label{eq:finite_square_norm}
\EV\sup\nolimits_{v\in B} |\langle Z,v\rangle|^2 <\infty.
\end{align}
(Here restriction to a countable subset ensures measurability 
of the supremum.) 
Then there exists a strict version $U$ of $Z$ restricted 
to $V$ and a random variable $u$ with values in $V'$ and 
finite second moments such that 
$\langle u(\omega),v\rangle = \langle U,v\rangle(\omega)$ 
for all $v\in V$ and $\omega\in\Omega$. 
\end{prop}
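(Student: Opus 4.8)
The plan is to construct the $V'$-valued random variable $u$ pathwise and then read off the strict version $U$ from it. Write $C:=\Cov(Z)\in L(X)$ for the (bounded, Hermitian, positive semidefinite) covariance operator of $Z$, and let $c_0>0$ be the embedding constant with $\|v\|_X\le c_0\|v\|_V$ for $v\in V$. First I would record two elementary consequences of the hypotheses. From \eqref{eq:finite_square_norm} the random variable $M(\omega):=\sup_{v\in B}|\langle Z,v\rangle(\omega)|$ satisfies $\EV M^2<\infty$, hence $M<\infty$ $P$-almost surely; set $\Omega_0:=\{M<\infty\}$. Since $\EV|\langle Z,v\rangle-\langle Z,w\rangle|^2=\langle C(v-w),v-w\rangle_X\le\|C\|\,c_0^2\,\|v-w\|_V^2$, the map $v\mapsto\langle Z,v\rangle$ is Lipschitz from $V$ into $L^2(P)$; equivalently, the intrinsic pseudometric $d(v,w):=\EV|\langle Z,v\rangle-\langle Z,w\rangle|^2{}^{1/2}$ of the Gaussian process is dominated by the $V$-norm. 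This $L^2$-continuity is the mechanism that will upgrade statements on the countable dense set $B$ to all of $V$.

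Next I would fix the null sets. As $B$ is countable, the set $D_0:=\mathrm{span}_{\mathbb{Q}+i\mathbb{Q}}(B)$ is countable, and for each of the countably many rational linear relations the almost sure linearity of $v\mapsto\langle Z,v\rangle$ holds off a $P$-null set; removing their union from $\Omega_0$ yields a set of full measure on which $\ell_\omega:=\langle Z,\cdot\rangle(\omega)$ is a $(\mathbb{Q}+i\mathbb{Q})$-linear functional on $D_0$ with $\sup_{v\in B}|\ell_\omega(v)|=M(\omega)<\infty$.

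The key and most delicate step is to show that, for almost every $\omega$, $\ell_\omega$ extends to a bounded linear functional $u(\omega)\in V'$, and to set $\langle U,v\rangle(\omega):=\langle u(\omega),v\rangle$. The subtlety — and the main obstacle — is that boundedness of a linear functional on a dense set does not by itself force boundedness on its span: a discontinuous linear functional may well be bounded on a cleverly chosen dense subset of the unit ball, so the extension cannot be purely functional-analytic and must exploit the Gaussian structure. Here I would invoke the classical sample-boundedness and sample-continuity theory for Gaussian processes underlying \cite[Prop.~2.1.12]{GN:15}: the finiteness of $\EV M^2$ forces, via Sudakov minoration, that $B$ be totally bounded in the intrinsic pseudometric $d$, and a centered Gaussian process that is almost surely bounded on a totally bounded index set admits a modification whose sample paths are almost surely $d$-uniformly continuous. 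Because $d(v,w)\le\sqrt{\|C\|}\,c_0\|v-w\|_V$, such $d$-continuity implies $\|\cdot\|_V$-continuity, so the sample path extends from the dense subspace $D_0$ to a continuous linear functional $u(\omega)\in V'$, whose norm equals $M(\omega)$ by density of $B$.

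It then remains to collect the measurable-version statements, which are routine. Since $V$ and hence $V'$ is separable and $\omega\mapsto\langle u(\omega),v\rangle=\langle Z,v\rangle(\omega)$ is measurable for every $v\in B$ (and, by the $L^2$-approximation of the first step, for every $v\in V$), Pettis' theorem shows that $u$ is a strongly measurable $V'$-valued random variable; its second moment is finite because $\|u(\omega)\|_{V'}=M(\omega)$, so $\EV\|u\|_{V'}^2=\EV M^2<\infty$. Finally, to verify that $U$ is a strict version of $Z$ restricted to $V$ I would check $P[\langle U,v\rangle=\langle Z,v\rangle]=1$ for every $v\in V$: this holds by construction for $v\in B$, and for general $v\in V$ one approximates $v$ by $v_n\in D_0$ in the $V$-norm and passes to the limit, using continuity of $u(\omega)\in V'$ on one side and the $L^2$-convergence $\langle Z,v_n\rangle\to\langle Z,v\rangle$ on the other. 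Once the Gaussian continuity theory of the third step is in place, these remaining measurability and limiting arguments are standard.
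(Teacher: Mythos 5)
First, a remark on the comparison you asked for: the paper does not prove this proposition at all --- it is quoted as a consequence of \cite[Prop.~2.1.12]{GN:15} --- so there is no in-paper argument to measure yours against. Your overall architecture (build $u(\omega)\in V'$ pathwise by extending the a.s.\ defined functional $\ell_\omega$ from the countable $\mathbb{Q}$-span $D_0$ of $B$, then obtain measurability from Pettis and the strict-version property from $L^2$-approximation) is sound, and you correctly isolate the crux, namely that boundedness of a linear functional on a dense subset of the unit ball does not imply boundedness. However, the theorem you invoke to close that gap is false: an almost surely bounded centered Gaussian process on a $d$-totally bounded index set need \emph{not} admit a $d$-uniformly continuous modification. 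Boundedness and continuity are governed by different conditions (in Talagrand's theory $\gamma_2(T,d)<\infty$ characterizes boundedness, while continuity requires an additional vanishing-oscillation condition). A concrete counterexample is $X_n=g_n/\sqrt{\log(n+2)}$ with $g_n$ i.i.d.\ standard normal: the index set is totally bounded in the intrinsic metric and $\EV\sup_n|X_n|^2<\infty$, yet $\limsup_n|X_n|=\sqrt{2}$ and $\liminf_n|X_n|=0$ almost surely, so no uniformly $d$-continuous modification exists. Hence your third step, as justified, does not go through; the property you want is true here only because of linearity, and invoking it at this point is essentially circular.

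The gap can be closed more elementarily, without Sudakov minoration or any Gaussian sample-path theory. Since $D_0$ is countable and $v\mapsto\langle Z,v\rangle$ is continuous from $V$ into $L^2(P)$ (your own second observation, using $\EV|\langle Z,v\rangle-\langle Z,w\rangle|^2=\langle \Cov(Z)(v-w),v-w\rangle_X\le \|\Cov(Z)\|c_0^2\|v-w\|_V^2$), fix $v\in D_0$ with $\|v\|_V\le 1$, choose $v_n\in B$ with $v_n\to v$ in $V$, and pass to an a.s.\ convergent subsequence of $\langle Z,v_n\rangle$ to conclude $|\ell_\omega(v)|\le M(\omega)$ almost surely; intersecting over the countably many such $v$ gives a full-measure event on which $|\ell_\omega(v)|\le M(\omega)$ for \emph{all} $v\in D_0$ with $\|v\|_V\le 1$. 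By $\mathbb{Q}$-homogeneity (for rational $q>\|v\|_V$ one has $v/q\in D_0$ and $\|v/q\|_V<1$) this upgrades to $|\ell_\omega(v)|\le M(\omega)\|v\|_V$ on all of $D_0$, so $\ell_\omega$ is $M(\omega)$-Lipschitz on the dense $\mathbb{Q}$-subspace $D_0$ and extends uniquely to an element $u(\omega)\in V'$ with $\|u(\omega)\|_{V'}= M(\omega)$. With this replacement for your third step, the remainder of your argument applies verbatim.
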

Note that, by interchanging the order of expectation and supremum, 
\eqref{eq:finite_square_norm} implies that $Z|_V$ has finite 
second moments, and that the supremum 
in \eqref{eq:finite_square_norm} equals $\|u\|_{V'}^2$. 

\begin{lem}\label{lem:finite_square_norm}
Eq.~\eqref{eq:finite_square_norm} holds true if the embedding 
$J:V\hookrightarrow X$ is Hilbert-Schmidt or if 
$V=X$ and $\Cov(Z)$ is trace class. 
\end{lem}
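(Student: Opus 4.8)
The plan is to treat the two sufficient conditions separately, reducing the Hilbert--Schmidt case to the trace-class case. I begin with the case $V=X$ and $\Cov(Z)$ trace class. Fixing an orthonormal basis $\{e_j\}$ of $X$ and setting $\xi_j:=\langle Z,e_j\rangle$, the defining property of the covariance operator gives $\EV|\xi_j|^2=\langle \Cov(Z)e_j,e_j\rangle$ (using that $Z$ is centered), so by monotone convergence $\EV\sum_j|\xi_j|^2=\operatorname{tr}\Cov(Z)<\infty$, and hence $\sum_j|\xi_j|^2<\infty$ almost surely. Since $Z$ has finite second moments, $\langle Z,\cdot\rangle\colon X\to L^2(\Omega_P)$ is bounded; therefore, for $v=\sum_j v_je_j\in B$ the partial sums $\sum_{j\le N}v_j\xi_j$ converge to $\langle Z,v\rangle$ in $L^2(\Omega_P)$, while on the full-measure event $\{\sum_j|\xi_j|^2<\infty\}$ they converge absolutely by Cauchy--Schwarz, since $\sum_j|v_j|^2=\|v\|_X^2\le1$. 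By uniqueness of almost-sure limits the two limits agree, so $\langle Z,v\rangle=\sum_j v_j\xi_j$ almost surely for each fixed $v\in B$.

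Because $B$ is countable, I may intersect these events into a single event of full measure on which $\langle Z,v\rangle=\sum_j v_j\xi_j$ holds simultaneously for all $v\in B$. On this event Cauchy--Schwarz yields $|\langle Z,v\rangle|\le(\sum_j|\xi_j|^2)^{1/2}$ for every $v\in B$, and since $B$ is dense in the closed unit ball of $X$ and $v\mapsto\langle Z,v\rangle$ is continuous, one in fact obtains the pointwise identity $\sup_{v\in B}|\langle Z,v\rangle|=(\sum_j|\xi_j|^2)^{1/2}$. Taking expectations establishes \eqref{eq:finite_square_norm} with value $\operatorname{tr}\Cov(Z)$, in agreement with \eqref{eq:EV_normsq}.

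For the second hypothesis I restrict $Z$ to $V$. The restriction $Z_V$ is again a centered Gaussian Hilbert space process, now on $V$, with finite second moments since $\EV|\langle Z,v\rangle|^2=\langle\Cov(Z)v,v\rangle_X\le\|\Cov(Z)\|\,\|J\|^2\,\|v\|_V^2$, where $J\colon V\hookrightarrow X$ denotes the embedding. Comparing sesquilinear forms shows that the covariance operator of $Z_V$ is $C_V=J^*\Cov(Z)J\in L(V)$. Using that $\Cov(Z)$ is positive semidefinite, for any orthonormal basis $\{v_j\}$ of $V$ one computes $\operatorname{tr}_V C_V=\sum_j\langle\Cov(Z)Jv_j,Jv_j\rangle_X\le\|\Cov(Z)\|\sum_j\|Jv_j\|_X^2=\|\Cov(Z)\|\,\|J\|_{\HS}^2<\infty$, so $C_V$ is trace class (for positive operators a finite trace is equivalent to the trace-class property; this also follows from \eqref{eq:productHS} since $J$ and $J^*$ are Hilbert--Schmidt). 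Applying the first case to $Z_V$ and noting $\langle Z_V,v\rangle=\langle Z,v\rangle$ for all $v\in V\supset B$ then gives the claim.

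The main obstacle is the measure-theoretic step in the trace-class case, namely passing from the basiswise $L^2$-representation of $\langle Z,v\rangle$ to a single almost-sure identity valid simultaneously over the whole unit ball. This is precisely where the countability and density of $B$ are needed: they allow the countable supremum to be identified, pointwise and almost surely, with $(\sum_j|\xi_j|^2)^{1/2}$, which also yields the equality recorded in the remark following Proposition~\ref{prop:regularity_white_noise}. The remaining estimates are routine applications of Cauchy--Schwarz and monotone convergence.
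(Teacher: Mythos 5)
Your proof is correct. It reaches the same bounds as the paper ($\operatorname{tr}\Cov(Z)$ in the trace-class case, a constant times $\|J\|_{\HS}^2$ in the Hilbert--Schmidt case), but it is organized differently, and the differences are worth noting. The paper proves both cases in parallel from a single weighted Cauchy--Schwarz inequality, $|\langle Z,v\rangle|^2\le\bigl(\sum_j\sigma_j^2|\langle Z,f_j\rangle|^2\bigr)\bigl(\sum_j\sigma_j^{-2}|\langle v,f_j\rangle_X|^2\bigr)$, built on the singular value decomposition of $J$ (taking $\sigma_j=1$ and eigenvectors of $\Cov(Z)$ in the trace-class case); you instead prove the trace-class case first and reduce the Hilbert--Schmidt case to it by observing that the restricted process $Z_V$ has covariance $J^*\Cov(Z)J$, which is trace class because the product of two Hilbert--Schmidt operators is trace class (Eq.~\eqref{eq:productHS}). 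The second genuine difference is the treatment of $B$: the paper dodges the measure-theoretic issue by \emph{choosing} $B$ to consist of finite rational linear combinations of the basis vectors, so that $\langle Z,v\rangle=\sum_j\langle Z,f_j\rangle\langle v,f_j\rangle$ is a finite-sum identity (this is legitimate, since Proposition~\ref{prop:regularity_white_noise} only requires the estimate for \emph{some} dense countable $B$); you handle an arbitrary countable $B$ in the unit ball by identifying the $L^2(\Omega_P)$-limit of the partial sums with the almost surely absolutely convergent series on the event $\{\sum_j|\xi_j|^2<\infty\}$ and then intersecting over $B$. Your version is therefore slightly stronger (it applies to any prescribed countable $B$, and yields the pointwise identity $\sup_{v\in B}|\langle Z,v\rangle|^2=\sum_j|\xi_j|^2$ mentioned after the proposition) at the cost of a longer argument; the paper's is shorter because it engineers $B$ so that no limiting argument is needed.
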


\begin{proof}
Let $\{(e_j,f_j,\sigma_j):j\in\mathbb{N}\}$ be a singular 
value decomposition of $J$. Then
\begin{align}\label{eq:aux_finite_square_norm}
|\langle Z,v\rangle|^2 
&= \left|\sum\nolimits_j \langle Z,f_j\rangle \langle v,f_j\rangle_X\right|^2 
\leq \paren{\sum\nolimits_j \sigma_j^2|\langle Z,f_j\rangle |^2}
\paren{\sum\nolimits_j\sigma_j^{-2}|\langle v,f_j\rangle_X|^2} 
\end{align}
for any $v\in  \operatorname{span}\{f_1,f_2,\dots\}$ 
by the Cauchy-Schwarz inequality. Note that 
$\langle v,f_j\rangle_X 
= \langle Jv,f_j\rangle_X
= \langle v,J^*f_j\rangle_V
= \sigma_j\langle v,e_j\rangle_V.
$ 
Let $B$ denote the intersection of the elements of this 
span with rational coefficients and the unit ball in 
$V$. Then we have  
\begin{align*}
\EV\sup_{v\in B} |\langle Z,v\rangle|^2 
\leq \sum\nolimits_j \sigma_j^2\EV  |\langle Z,f_j\rangle|^2
\sup_{v\in B}\paren{\sum\nolimits_j|\langle v,e_j\rangle_V|^2}
\leq C\|J\|_{\HS}^2. 
\end{align*}
If $\Cov(Z)\in S_1(X)$, let $\{f_j:j\in\mathbb{N}$ be an orthonormal basis of eigenvectors of $\Cov(Z)$ with corresponding eigenvalues $\lambda_j$. As $\EV |\langle Z,f_j\rangle|^2
= \langle \Cov(Z) f_j,f_j\rangle = \lambda_j$, we obtain
from \eqref{eq:aux_finite_square_norm} 
with $\sigma_j=1$ that 
\[
\EV\sup_{v\in B} |\langle Z,v\rangle|^2 
\leq \sum\nolimits_j \EV  |\langle Z,f_j\rangle|^2
\sup_{v\in B}\paren{\sum\nolimits_j|\langle v,f_j\rangle_X|^2}
\leq \operatorname{tr} \Cov(Z).
\]
\end{proof}

\begin{lem}\label{lem:cov_fct_op}
Let $u$ be a centered random variable with values in 
$L^2(M)$ for some Borel measure space $M$. 
Then the two-point covariance function 
$c_u(x,y):=\EV(\overline{u(x)}u(y))$, $x,y\in M$ is the integral 
kernel of the covariance operator $\Cov(u)$.
\end{lem}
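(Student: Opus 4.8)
The plan is to show that $\Cov(u)$ and the integral operator $\mathcal{C}$ with kernel $c_u$ induce the same sesquilinear form on $L^2(M)\times L^2(M)$, and then to invoke uniqueness of the Schwartz kernel of a Hilbert--Schmidt operator. First I would verify that $c_u$ is a genuine element of $L^2(M\times M)$, so that $\mathcal{C}$ is a well-defined Hilbert--Schmidt operator. Setting $\phi(x):=\EV|u(x)|^2$, Tonelli's theorem together with the finite-second-moment property $\EV\|u\|_{L^2(M)}^2<\infty$ (implicit in $u$ being a random variable admitting a covariance operator) gives $\int_M\phi(x)\,\mathrm{d}x=\EV\|u\|_{L^2(M)}^2<\infty$; hence $\phi\in L^1(M)$, the random variable $u(x)$ is square integrable for almost every $x$, and $c_u(x,y)$ is well defined a.e. The Cauchy--Schwarz inequality on the probability space then yields $|c_u(x,y)|\le\phi(x)^{1/2}\phi(y)^{1/2}$, so that $\|c_u\|_{L^2(M\times M)}^2\le\bigl(\int_M\phi\bigr)^2<\infty$.

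The heart of the proof is the defining characterization of the covariance operator recalled in Subsection~\ref{section3}: since $u$ is centered, $\langle\Cov(u)v_1,v_2\rangle_{L^2(M)}=\EV\bigl[\,\overline{\langle u,v_1\rangle}\,\langle u,v_2\rangle\,\bigr]$ for all $v_1,v_2\in L^2(M)$. I would expand the two $L^2(M)$-inner products as integrals over $M$ and interchange the expectation with the resulting double integral to obtain
\[
\langle\Cov(u)v_1,v_2\rangle_{L^2(M)}=\int_M\int_M\EV\bigl[u(y)\overline{u(x)}\bigr]\,\overline{v_1(y)}\,v_2(x)\,\mathrm{d}y\,\mathrm{d}x.
\]
Recognizing $\EV[u(y)\overline{u(x)}]=c_u(x,y)$ and comparing with the sesquilinear form of the integral operator whose kernel is $c_u$ identifies the two forms; the complex conjugate that arises in matching the forms is absorbed by the Hermitian symmetry $c_u(y,x)=\overline{c_u(x,y)}$ of the two-point covariance, which is exactly the self-adjointness of $\Cov(u)$.

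The one step requiring genuine care --- and the main obstacle --- is the interchange of $\EV$ with the double integral, which I would justify by Fubini--Tonelli. The required integrability is the bound
\[
\EV\int_M\int_M|u(x)|\,|u(y)|\,|v_1(y)|\,|v_2(x)|\,\mathrm{d}y\,\mathrm{d}x\le\EV\|u\|_{L^2(M)}^2\,\|v_1\|_{L^2(M)}\|v_2\|_{L^2(M)}<\infty,
\]
obtained by applying the Cauchy--Schwarz inequality to each of the two inner $M$-integrals and then taking expectations. Once the sesquilinear forms of $\Cov(u)$ and $\mathcal{C}$ agree on all pairs $(v_1,v_2)$, the two operators coincide; and since the $L^2$-kernel of a Hilbert--Schmidt operator on $L^2(M)$ is uniquely determined almost everywhere (cf.\ \eqref{eq:HSL2}), it follows that $c_u$ is the integral kernel of $\Cov(u)$, as claimed.
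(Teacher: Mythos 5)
Your proof is correct and follows essentially the same route as the paper's: expand $\langle \Cov(u)v_1,v_2\rangle$ via the defining property of the covariance operator and interchange expectation with the double integral by Fubini's theorem to read off the kernel. The additional details you supply — the verification that $c_u\in L^2(M\times M)$ and the explicit integrability bound justifying the interchange — are correct and only make explicit what the paper leaves implicit.
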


\begin{proof}
For any $v_1,v_2\in L^2(M)$ we have 
\begin{align*}
\langle \Cov(u)v_1,v_2\rangle 
&= \EV\left(\overline{\langle u,v_1\rangle}\langle u,v_2\rangle \right)
= \EV \left(\int_M \overline{u(x)}\overline{v_1(x)}\dx
\int_M u(y)v_2(y)\dy\right)\\
&=\int_M\int_M \EV\left(\overline{u(x)}u(y)\right)
\overline{v_1(x)}v_2(y)\dx\dy,
\end{align*}
using Fubini's theorem to interchange the order of 
expectation and integration. 
\end{proof}

If $Y$ is another Hilbert space over $\mathbb{K}$ and 
$\mathcal{G}\in L(X,Y)$, then we can define a Hilbert space process 
$\mathcal{G}Z$ with finite second moments on $Y$ by 
$\langle \mathcal{G}Z,v\rangle:=\langle Z,\mathcal{G}^*v\rangle$ for all $v\in Y$. 
One readily checks that 
\begin{align}\label{eq:linOp_RP}
\EV(\mathcal{G}Z) = \mathcal{G}\EV(Z),\qquad \Cov(\mathcal{G}Z) = \mathcal{G}\Cov(Z)\mathcal{G}^*.
\end{align}

\section*{Acknowledgments}

The authors gratefully acknowledge support from the Deutsche Forschungsgemeinschaft (DFG) through CRC 1456 (grant 432680300, project C04).

\bibliographystyle{siamplain}
\bibliography{references} 

\begin{comment}

\end{comment}

\end{document}